\newcommand{\pd}{\partial}
\newcommand{\R}{\mathbb{R}}
\newcommand{\uS}{\mathbb{S}^{n-1}}
\newcommand{\MA}{Monge-Amp\`ere }
\newtheorem{theorem}{Theorem}[section]
\newtheorem{lemma}[theorem]{Lemma}
 \theoremstyle{definition}
\newtheorem{definition}[theorem]{Definition}
\theoremstyle{remark}
\newtheorem{remark}[theorem]{Remark}
\numberwithin{equation}{section}
\begin{document}

\title{The $L_p$ Gauss image problem}

\author{Chuanxi Wu}
\address{Faculty of Mathematics and Statistics, Hubei Key Laboratory of Applied Mathematics, Hubei University,  Wuhan 430062, P.R. China}

\email{cxwu@hubu.edu.cn}

\author{Di Wu}
\address{Faculty of Mathematics and Statistics, Hubei Key Laboratory of Applied Mathematics, Hubei University,  Wuhan 430062, P.R. China}
\email{wudi19950106@126.com}

\author{Ni Xiang}
\address{Faculty of Mathematics and Statistics, Hubei Key Laboratory of Applied Mathematics, Hubei University, Wuhan 430062, P.R. China}
\email{nixiang@hubu.edu.cn}

\date{}

\begin{abstract}
In this paper we study the $L_p$ Gauss image problem, which is a
generalization of the $L_p$ Aleksandrov problem and the Gauss image problem in convex geometry.
We obtain the existence result for the $L_p$ Gauss image problem in two cases (i) $p>0$ or (ii) $p<0$ with the given even measures.
\end{abstract}

\maketitle {\it \small{{\bf Keywords}: the $L_p$ Gauss image problem,
  Surface area measure,
  Curvature measure,
  Existence of solution,
  Blaskchke selection theorem.}

{{\bf MSC 2020}: Primary 35J96, Secondary
52A20.}
}

\vskip4ex

\section{Introduction}

Let $\mathcal{K}^n_0$ denote the set of convex bodies in $\R^n$ which contains the
origin in its interior. For $K\in\mathcal{K}^n_0$, its support function, $h_K$, is defined by
\begin{equation*}
h_K(x) := \max_{y\in K} \langle y,x \rangle, \quad \forall\, x\in\uS,
\end{equation*}
where $\langle y,x \rangle$ is the standard inner product of $x$ and $y$ in $\R^n$.

Suppose $K,L\in\mathcal{K}^n_0$ and $t\geq0$, the Minkowski combination, $K+tL\in\mathcal{K}^n_0$, is given by
\begin{equation*}
h_{K+tL}=h_K+t h_L;
\end{equation*}
for negative $t$, $K+tL$ can be defined if $|t|$ is sufficient small.
Aleksandrov's variational formula shows that
\begin{equation*}
\frac{d}{dt}V(K+tL)\bigg|_{t=0}=\int_{\uS}h_L(x)dS(K,x), \quad \forall L\in\mathcal{K}^n_0,
\end{equation*}
where $V(\cdot)$ denotes the volume functional and $S(K,\cdot)$ is the surface area measure.

A nature extension is $L_p$ Minkowski sum, which was first defined by Firey in case $p\geq1$.
Suppose $K,L\in\mathcal{K}^n_0$ and $t\geq0$, the $L_p$ Minkowski combination, $K+_p t\cdot L\in\mathcal{K}^n_0$, is defined by
\begin{equation*}
h^p_{K+_p t\cdot L}=h^p_K+t h^p_L.
\end{equation*}
In the early 1900's, $K+_p t\cdot L$ can be defined for negative $t$ if $|t|$ is sufficient small.  Lutwak \cite{LW.JDE.254-2013.983} showed the variational formula
\begin{equation*}
\frac{d}{dt}V(K+_p t\cdot L)\bigg|_{t=0}=\frac{1}{p}\int_{\uS}h^p_L(x)dS_p(K,x),\quad \forall L\in\mathcal{K}^n_0,
\end{equation*}
where $S_p(K,\cdot)$ is the $L_p$ surface area measure, satisfying
\begin{equation}\label{sp}
dS_p(K,\cdot)=h_K^{1-p}dS(K,\cdot).
\end{equation}
Thus by \eqref{sp}, the $L_p$ surface area measure can be defined for any $p\in\R$.

Suppose $K\in\mathcal{K}^n_0$, $K^\ast$ stands for the polar body of $K$, which is given by
\begin{equation}\label{ast}
K^\ast=\bigcap_{y\in K}\{x\in \mathbb{R}^n: \langle x, y\rangle\leq1\}.
\end{equation}

By combining the concept of $L_p$ Minkowski combination with that of polarity, we obtain another kind of combination, the $L_p$ harmonic combination, $K\hat{+}_p t\cdot L\in\mathcal{K}^n_0$, is defined by
\begin{equation*}
K\hat{+}_p t\cdot L=(K^*+_p t\cdot L^*)^*.
\end{equation*}
Huang, Lutwak, Yang and Zhang \cite{HLYZ.JDG.110-2018.1} considered the entropy functional
\begin{equation}\label{e}
\mathcal{E}(K)=-\int_{\uS}\log h_K(x)dx,
\end{equation}
and got the variational formula
\begin{equation*}
\frac{d}{dt}\mathcal{E}(K\hat{+}_p t\cdot L)\bigg|_{t=0}=\frac{1}{p}\int_{\uS}\rho^{-p}_L(u)dJ_p(K,u),
\quad \forall L\in\mathcal{K}^n_0,
\end{equation*}
where $\rho_L$ is the radial function of $L$, given by
\begin{equation*}
\rho_L(u) :=\max\{t>0 : t u\in L\}, \quad\forall\, u\in\uS.
\end{equation*}
They proved the $L_p$ integral curvature $J_p(K,\cdot)$ satisfies
\begin{equation}\label{J_p}
dJ_p(K,\cdot)=\rho_K^{p}dJ(K,\cdot),
\end{equation}
where $J(K,\cdot)$ is the Aleksandrov integral curvature.

For any Borel measurable subset $\omega\subset\uS$,  the radial Gauss image of $\omega$,  $\boldsymbol{\alpha}_K(\omega)$, is given by
\begin{equation*}
\boldsymbol{\alpha}_K(\omega)=\{x\in\mathbb{S}^{n-1}:  \langle\rho_K(u)u,x\rangle=h_K(x)\text{ for some }u\in \omega\}.
\end{equation*}
Then $J(K,\cdot)$ is the spherical Lebesgue measure of the radial Gauss image $\boldsymbol{\alpha}_{K}$, that is,
\begin{equation}\label{j}
J(K,\omega)=\mathcal{H}^{n-1}(\boldsymbol{\alpha}_{K}(\omega)).
\end{equation}

Recently, Boroczky, Lutwak, Yang, Zhang and Zhao \cite{B2020}
proposed the Gauss image measure $\lambda(K,\omega)$, which is a generalization of the Aleskandrov integral measure. We state it more precisely below.

\begin{definition}
Let $\lambda$ be a Borel measure on $\uS$, and $K\in\mathcal{K}^n_0$.
Then the Gauss image measure of $\lambda$ via $K$ is defined by
\begin{equation}\label{lambda}
\lambda(K,\omega)=\lambda(\boldsymbol{\alpha}_{K}(\omega)),
\end{equation}
where $\omega$ is a Lebesgue measurable subset of $\uS$.
\end{definition}

Based on \eqref{e}, \eqref{j} and \eqref{lambda}, it is nature to consider the functional
\begin{equation}\label{g}
G(K)=-\int_{\uS}\log h_K(x)d\lambda(x).
\end{equation}
And its variational formulas is
\begin{equation*}
\frac{d}{dt}G(K\hat{+}_p t\cdot L)\bigg|_{t=0}=\frac{1}{p}\int_{\uS}\rho_{L}^{-p}(u)d\lambda_p(K,u),
\quad \forall L\in\mathcal{K}^n_0,
\end{equation*}
where $\lambda_p(K,\cdot)$ is the $L_p$ Gauss image measure, satisfying
\begin{equation}\label{lambda_p}
d\lambda_p(K,\cdot)=\rho_K^p d\lambda(K,\cdot).
\end{equation}
It is clear that $\lambda_0(K,\cdot)=\lambda(K,\cdot)$, and \eqref{lambda_p} becomes \eqref{J_p} if $\lambda(K,\cdot)$ is $J(K,\cdot)$. Therefore, the $L_p$ Aleksandrov measure and the Gauss image measure
are special cases of the $L_p$ Gauss image measure.

\textbf{The $L_p$ Gauss image problem.} \emph{For a fixed $p\in\R$, suppose $\lambda$ and $\mu$ are two Borel measures defined on the Borel measurable subsets of $\uS$. What are the necessary and sufficient conditions, on $\lambda$ and $\mu$, such that there exists a convex
body $K$,
\begin{equation*}
\mu=\lambda_p(K,\cdot)
\end{equation*}
on the Borel subsets of $\uS$? And if such a body exists, to what
extent is it unique? }

When $\lambda$ is spherical Lebesgue measure, the $L_p$ Gauss image
problem is just the $L_p$ Aleksandrov problem, see \cite{A1, A2, B16,HLYZ.JDG.110-2018.1}. The $L_0$ Gauss image problem is just the Gauss image problem which was first mentioned in \cite{B2020}, and the existence of smooth solution for the Gauss image problem was in \cite{CWX}.
It is necessary to contrast the $L_p$ Gauss image problem with the various Minkowski
problems and dual Minkowski problems that have been extensively
studied, see \cite{BLYZ.JAMS.26-2013.831,
  CLZ.TAMS.371-2019.2623,
  CW.AIHPANL.17-2000.733,
  CW.Adv.205-2006.33,
  HLYZ.DCG.33-2005.699,
  JLW.JFA.274-2018.826,
  JLZ.CVPDE.55-2016.41,
  Lu.SCM.61-2018.511,
  Lu.JDE.266-2019.4394,
  LW.JDE.254-2013.983,
  Lut.JDG.38-1993.131,
  LYZ.TAMS.356-2004.4359,
  Sta.Adv.167-2002.160,
   Zhu.Adv.262-2014.909,Zhu.JDG.101-2015.159}
for the $L_p$-Minkowski problem, \cite{BHP.JDG.109-2018.411,
  HP.Adv.323-2018.114,
  HJ.JFA.277-2019.2209,
  HLYZ.Acta.216-2016.325,
  LSW.JEMSJ.22-2020.893,
  Zha.CVPDE.56-2017.18,
  Zha.JDG.110-2018.543}
for the dual Minkowski problem,
\cite{BF.JDE.266-2019.7980,
  CHZ.MA.373-2019.953,
  CCL,
  HLYZ.JDG.110-2018.1,
  HZ.Adv.332-2018.57,
  LLL,LYZ.Adv.329-2018.85}
for the $L_p$ dual Minkowski problem,
\cite{BBC.AiAM.111-2019.101937,
  HLYZ.Adv.224-2010.2485,
  HH.DCG.48-2012.281,
  JL.Adv.344-2019.262} for the Orlicz Minkowski problem,
\cite{CLLN, CTWX,GHW+.CVPDE.58-2019.12,GHXY.CVPDE.59-2020.15, LL.TAMS.373-2020.5833} for the dual Orlicz Minkowski problem, \cite{FB} for the Orlicz Aleskandrov problem.

The existence result for the $L_p$ Gauss image problem when $p>0$ will be presented in the followong. The idea goes back as for as \cite{HLYZ.JDG.110-2018.1}.

\begin{theorem}\label{main1}
Suppose $p>0$. If $\lambda$ and $\mu$ are two finite Borel measures on $\uS$ which satisfy

(1) $\lambda$ is absolutely continuous with respect to spherical Lebesgue measure;

(2) $\lambda(A)>0$ for any nonempty open set $A\subset\uS$;

(3) $\mu$ is not concentrated in any closed hemisphere of $\uS$.\\
Then there exists a convex body $K$ such that $\mu=\lambda_p(K,\cdot)$.
\end{theorem}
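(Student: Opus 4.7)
The plan is a variational method in the spirit of Huang--Lutwak--Yang--Zhang \cite{HLYZ.JDG.110-2018.1} for the $L_p$ Aleksandrov problem. Introduce the negative entropy and the ``$L_p$ radial'' functional
\begin{equation*}
G_\lambda(K)=-\int_{\uS}\log h_K(x)\,d\lambda(x),\qquad \Phi_{\mu,p}(K)=\int_{\uS}\rho_K^{-p}(u)\,d\mu(u),
\end{equation*}
and study the constrained minimization
\begin{equation*}
\inf\bigl\{\,G_\lambda(K)\ :\ K\in\mathcal{K}^n_0,\ \Phi_{\mu,p}(K)=\mu(\uS)\,\bigr\}.
\end{equation*}
The motivation is that the variational formula recorded in the introduction,
\begin{equation*}
\tfrac{d}{dt}G_\lambda(K\hat{+}_p t\cdot L)\big|_{t=0}=\tfrac{1}{p}\int_{\uS}\rho_L^{-p}\,d\lambda_p(K,\cdot),
\end{equation*}
combined with the one-line identity $\rho^{-p}_{K\hat{+}_p t\cdot L}=\rho^{-p}_K+t\rho^{-p}_L$ (which yields $\tfrac{d}{dt}\Phi_{\mu,p}(K\hat{+}_p t\cdot L)\big|_{t=0}=\int_{\uS}\rho_L^{-p}\,d\mu$), turns the Lagrange multiplier relation at a minimizer into an equation of exactly the form $\lambda_p(K,\cdot)=c\mu$.

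Next I would prove existence of the minimizer. Since $\Phi_{\mu,p}(tK)=t^{-p}\Phi_{\mu,p}(K)$, the constraint fixes the scale. Combining this scale-fixing with the finiteness of $\mu$ and hypothesis (3) yields uniform two-sided bounds (diameter from above, inradius from below) along a minimizing sequence $(K_j)$, so the Blaschke selection theorem extracts a Hausdorff-convergent subsequence with limit $K_0$; continuity of both functionals along such sequences (via dominated convergence and the finiteness of the measures) shows $K_0$ realizes the infimum. The main obstacle is then to prove $K_0\in\mathcal{K}^n_0$: one must rule out both (i) the origin slipping onto $\pd K_0$, in which case $\rho_{K_0}^{-p}=+\infty$ on a closed hemisphere and hypothesis (3) forces $\Phi_{\mu,p}(K_0)=+\infty$, contradicting admissibility; and (ii) $K_0$ collapsing into a proper subspace, in which case $h_{K_0}$ vanishes on a spherical cap with positive $\lambda$-measure (by hypotheses (1)--(2)), giving $G_\lambda(K_0)=+\infty$ and contradicting minimality against a fixed competitor such as a suitably rescaled ball. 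Carrying this out carefully is where the three hypotheses are used.

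With $K_0\in\mathcal{K}^n_0$ in hand, the Lagrange multiplier rule, applied via the two variational formulas above to the $L_p$-harmonic perturbation $K_0\hat{+}_p t\cdot L$, yields a constant $c>0$ such that
\begin{equation*}
\int_{\uS}\rho_L^{-p}(u)\,d\lambda_p(K_0,u)=pc\int_{\uS}\rho_L^{-p}(u)\,d\mu(u)\qquad\text{for every }L\in\mathcal{K}^n_0.
\end{equation*}
Since the family $\{\rho_L^{-p}:L\in\mathcal{K}^n_0\}=\{h_{L^*}^p:L^*\in\mathcal{K}^n_0\}$ is rich enough to separate finite Borel measures on $\uS$, we conclude $\lambda_p(K_0,\cdot)=pc\,\mu$. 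Finally, using the scaling identity $\lambda_p(tK_0,\cdot)=t^p\lambda_p(K_0,\cdot)$ (which follows from $\rho_{tK_0}=t\rho_{K_0}$ together with the scale-invariance of the radial Gauss image $\boldsymbol{\alpha}_K$), the choice $t=(pc)^{-1/p}$ produces the desired convex body $K:=tK_0$ satisfying $\lambda_p(K,\cdot)=\mu$.
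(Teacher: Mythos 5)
Your overall architecture (optimize an entropy functional under the normalization $\int_{\uS}\rho_K^{-p}\,d\mu=\mathrm{const}$, derive the Euler--Lagrange identity $\lambda_p(K,\cdot)=c\,\mu$, then rescale using $\lambda_p(tK,\cdot)=t^p\lambda_p(K,\cdot)$) is the same as the paper's, which works with the scale-invariant functional $\Phi_{\lambda,\mu,p}(f)=G(\langle f\rangle)/|\lambda|+\log||f:\mu||_{-p}$. But you optimize in the wrong direction, and this is fatal: on your constraint set the paper \emph{maximizes} $G_\lambda$, whereas you propose to \emph{minimize} it, and the infimum of your problem is $-\infty$. Concretely, take $K_j=\mathrm{conv}\bigl(B\cup\{\pm je_1\}\bigr)$ and set $t_j=\bigl(|\mu|^{-1}\int_{\uS}\rho_{K_j}^{-p}\,d\mu\bigr)^{1/p}\in(0,1]$, so that $t_jK_j$ satisfies your constraint. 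Since $\rho_{K_j}\ge 1$ increases to a limit that is finite off $\{\pm e_1\}$, and hypothesis (3) forbids $\mu$ from being supported on $\{\pm e_1\}$ (a set contained in a closed hemisphere), the numbers $t_j$ stay bounded away from $0$; on the other hand $h_{K_j}(x)=\max(1,j|\langle x,e_1\rangle|)\to\infty$ for $\lambda$-a.e.\ $x$ by hypothesis (1), so $G_\lambda(t_jK_j)=G_\lambda(K_j)-|\lambda|\log t_j\to-\infty$. Relatedly, your claim that the constraint plus hypothesis (3) gives ``uniform two-sided bounds (diameter from above, inradius from below)'' is not correct: as in the paper's Lemma \ref{lemma5.3}, the inequality $\rho_{K^*}(x)(\langle x,u\rangle)_+\le h_{K^*}(u)$ together with $\min_x\int_{\uS}(\langle x,u\rangle)_+^p\,d\mu>0$ shows the constraint forces $K\supset rB$ for a fixed $r>0$, i.e.\ it bounds $K$ only from \emph{below}; the bound from above (equivalently, preventing $K^*$ from degenerating, detected through $G(K)=E(K^*)=\int\log\rho_{K^*}\,d\lambda\to-\infty$ via Lemma \ref{lemma2.1} and hypotheses (1)--(2)) comes precisely from the fact that one is maximizing $G_\lambda$ and the unit ball already achieves the value $0$. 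So the existence step collapses as written; you must replace $\inf$ by $\sup$ and redo the compactness discussion accordingly.

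A secondary, fixable point: at the Euler--Lagrange stage you test only against the family $\{\rho_L^{-p}:L\in\mathcal{K}^n_0\}=\{h_{L^*}^p\}$ and assert it separates finite Borel measures; that requires an argument (density of the linear span of $\{h_L^p\}$ in $C(\uS)$). The paper avoids this by perturbing logarithmically, $\log\rho_t=\log\rho_K+tg$ with an arbitrary $g\in C(\uS)$, using Lemma \ref{lemma5.1} to reduce the supremum over $C^+(\uS)$ to radial functions of convex bodies and Lemma \ref{lemma4.2} for the derivative of $G$, which yields $\int_{\uS}g\,d\lambda(K,\cdot)=\int_{\uS}g\,\rho_K^{-p}\,d\mu$ for all continuous $g$ directly; moreover its normalization $\int_{\uS}\rho_K^{-p}\,d\mu=|\lambda|$ makes the multiplier equal to $1$, so no final rescaling is needed.
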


If the given measures are even, the third condition in Theorem \ref{main1} is naturally satisfied, and we can get the existence result for $p<0$.

\begin{theorem}\label{main2}
Suppose $p<0$. If $\lambda$ and $\mu$ are two finite, even Borel measures on $\uS$ which satisfy

(1) $\lambda$ is absolutely continuous with respect to spherical Lebesgue measure;

(2) $\lambda(A)>0$ for any nonempty open set $A\subset\uS$;

(3) $\mu$ is vanishes on great sub-spheres of $\uS$.\\
Then there exists an origin-symmetric convex body $K$ such that $\mu=\lambda_p(K,\cdot)$.
\end{theorem}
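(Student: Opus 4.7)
The plan is to adapt the variational method of \cite{HLYZ.JDG.110-2018.1} used for Theorem~\ref{main1} to the origin-symmetric, negative-$p$ setting. Let $\mathcal{K}^n_{0,e}\subset\mathcal{K}^n_0$ denote the origin-symmetric bodies, write $\Phi_p(K)=\int_{\uS}\rho_K^{-p}(u)\,d\mu(u)$, and consider the scale-invariant functional
\[
F_p(K)=\frac{G(K)}{|\lambda|}-\frac{1}{p}\log\frac{\Phi_p(K)}{|\mu|},\qquad K\in\mathcal{K}^n_{0,e}.
\]
Scale invariance follows from $G(cK)=G(K)-|\lambda|\log c$ and $\Phi_p(cK)=c^{-p}\Phi_p(K)$. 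The variational formula recalled in the introduction, combined with $\rho_{K\hat{+}_p t\cdot L}^{-p}=\rho_K^{-p}+t\rho_L^{-p}$ (immediate from the definition of the $L_p$ harmonic combination via polarity), yields the Euler-Lagrange equation $\lambda_p(K,\cdot)=\frac{|\lambda|}{\Phi_p(K)}\mu$ at any critical point. Since $\lambda_p(cK,\cdot)=c^p\lambda_p(K,\cdot)$, a homothety $K\mapsto\tau K$ with $\tau^p=\Phi_p(K)/|\lambda|$ then converts this into the desired $\mu=\lambda_p(K,\cdot)$.

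I would extract a maximizer of $F_p$ as follows. Since $F_p(B_1)=0$ the supremum is $\geq 0$. Pick a maximizing sequence $\{K_j\}\subset\mathcal{K}^n_{0,e}$ and, using scale invariance, normalize so that $\max_{u\in\uS}\rho_{K_j}(u)=1$, whence $K_j\subset B_1$. Blaschke's selection theorem extracts a subsequence converging in the Hausdorff metric to an origin-symmetric compact convex set $K_0\subset B_1$.

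The decisive step is to rule out that $K_0$ lies in a proper linear subspace. If $K_0\subset v_0^\perp$ for some $v_0\in\uS$, Hausdorff convergence forces $\rho_{K_j}(u)\to 0$ for every $u$ with $\langle u,v_0\rangle\neq 0$, while the normalization yields $0\leq\rho_{K_j}^{-p}=\rho_{K_j}^{|p|}\leq 1$. Because $\mu$ vanishes on the great sub-sphere $v_0^\perp\cap\uS$ by hypothesis~(3), dominated convergence gives $\Phi_p(K_j)\to 0$ and hence $-\frac{1}{p}\log(\Phi_p(K_j)/|\mu|)\to-\infty$ since $p<0$. At the same time $G(K_j)$ stays bounded above: origin-symmetry and the normalization produce a $u_j\in\uS$ with $\pm u_j\in K_j$, so $h_{K_j}(x)\geq|\langle u_j,x\rangle|$ and $G(K_j)\leq\int_{\uS}-\log|\langle u_j,x\rangle|\,d\lambda(x)$, which is finite under the absolute continuity of $\lambda$. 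Consequently $F_p(K_j)\to-\infty$, contradicting that the sequence is maximizing. Hence $K_0\in\mathcal{K}^n_{0,e}$ is a maximizer of $F_p$, and the first paragraph delivers the body. This non-degeneracy step is the main obstacle, as it is precisely where the hypotheses on $\mu$ (evenness and vanishing on great sub-spheres) must be coordinated with $p<0$ to make $\Phi_p$ collapse while $G$ stays controlled; the Euler-Lagrange computation, the identification of the two even measures via testing against $\rho_L^{-p}$ for $L\in\mathcal{K}^n_{0,e}$, and the Hausdorff extraction are routine once the functional is chosen correctly.
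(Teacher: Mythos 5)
Your overall architecture coincides with the paper's: your $F_p$ is exactly the functional $\Phi_{\lambda,\mu,p}(\rho_K)$ of \eqref{phi}, and your non-degeneracy argument is, up to the choice of normalization, the paper's Lemma \ref{lemma5.5}. (The paper normalizes by $\int_{\uS}\log h_K\,d\lambda=0$ and bounds $\rho_K$ using $\int_{\uS}\log|\langle u_K,x\rangle|\,d\lambda$; you normalize by $\max_{u}\rho_{K_j}(u)=1$ and bound $G(K_j)$ by the same integral $\int_{\uS}-\log|\langle u_j,x\rangle|\,d\lambda$. Both versions rest on the same implicit finiteness of that integral uniformly in the direction, so this is not a point against you.) The decisive step --- dominated convergence driving $\int_{\uS}\rho_{K_j}^{-p}\,d\mu\to0$ via hypothesis (3) and $p<0$ --- is the paper's argument.

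The genuine gap is in your Euler--Lagrange step. The identity $\rho_{K\hat{+}_p t\cdot L}^{-p}=\rho_K^{-p}+t\rho_L^{-p}$ is not ``immediate from the definition'': $K\hat{+}_p t\cdot L=(K^*+_p t\cdot L^*)^*$, and $K^*+_p t\cdot L^*$ is the Wulff shape $[(h_{K^*}^p+th_{L^*}^p)^{1/p}]$, whose support function satisfies only $h_{[f]}\leq f$, with equality failing in general because $(h_{K^*}^p+th_{L^*}^p)^{1/p}$ need not be a support function when $p<0$. Since $p<0$, what you actually get is the one-sided inequality $\rho_{K\hat{+}_p t\cdot L}^{-p}\geq\rho_K^{-p}+t\rho_L^{-p}$ for both signs of $t$; this can still be combined with maximality to force the Euler--Lagrange equation, but that argument has to be made --- as written the step is false. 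In addition, testing only against $\rho_L^{-p}$ for origin-symmetric bodies $L$ requires a further density argument (e.g.\ via small perturbations of the ball) before you may identify the two even measures. The paper sidesteps both issues by perturbing the radial function directly, $\rho_t=\rho_K e^{tg}$ with arbitrary continuous $g$, using Lemma \ref{lemma5.1} to pass between functions and bodies and Lemma \ref{lemma4.2} to differentiate $G(\langle\rho_t\rangle)$, which yields $\rho_K^{-p}\,d\mu=d\lambda(K,\cdot)$ against all continuous test functions at once. Your compactness argument stands; the variational step needs to be repaired along these lines.
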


In particular, when $\mu$ has a density, say $f$, and $\lambda$ has a density, say $g$, the $L_p$ Gauss image problem asks: Under what conditions on the two given functions $f,g:\uS\rightarrow[0,\infty)$,  does there exist a solution $h:\uS\rightarrow (0,\infty)$, that is the support function of a convex body $K^*$, to the \MA equation
\begin{equation}\label{1.9}
g\bigg(\frac{\nabla h+hx}{|\nabla h+hx|}\bigg)\frac{h^{1-p}}{(|\nabla h|^2+h^2)^{\frac{n}{2}}}\det(\nabla^2 h+hI)=f(x) \quad \text{on} \quad \uS.
\end{equation}

\begin{remark}
The weak solution of the equation \eqref{1.9} for $g\equiv1$ was solved by Huang, Lutwak, Yang and Zhang \cite{HLYZ.JDG.110-2018.1}.
\end{remark}

It is worth pointing out that if $\lambda$ and $\mu$ have densities, the weak solution of the equation
\eqref{1.9} can be obtained according to Theorem \ref{main1} and Theorem \ref{main2}.

\begin{remark}
Assume $g>0$.

(1) For $p>0$, if for any hemisphere $\Theta$,  $$\int_{\Theta}f(x)dx>0,$$ then the equation \eqref{1.9} has a strictly positive solution.

(2) For $p<0$, if $f$ and $g$ are even functions, and if $$\int_{\uS}f(x)dx>0,$$
then the equation \eqref{1.9} has a strictly positive even solution.
\end{remark}

This paper is organized as follows. In section 2, we give some basic
knowledge about convex body. In
section 3, The $L_p$ Gauss image measure will be given, based on the radial Gauss image. In section 4, the variational formulas will be obtained and in the last section, Theorem \ref{main1} and Theorem \ref{main2} will be proved.

\section{Preliminaries}

In this section, some notions and facts will be set up, the details can be found in \cite{SR}.

Let $K$ be a convex body in $\R^n$, which means $K$ is a compact, convex subset in $\R^n$ with non-empty interior. And $\mathcal{K}^n=\{K: K$ is a convex body in $\R^n\}$,  $\mathcal{K}^n_e=\{K\in\mathcal{K}^n: K$ is origin-symmetric $\}$.

Suppose $K\in\mathcal{K}^n_0$, recall $h_K$ and $\rho_K$ denote the support function and the radial function of $K$, respectively,
\begin{equation}\label{h}
h_K(x) := \max_{y\in K} \langle y,x \rangle, \quad \forall\, x\in\uS,
\end{equation}
and
\begin{equation}\label{rho}
\rho_K(u) :=\max\{t>0 : t u\in K\}, \quad\forall\, u\in\uS.
\end{equation}
Note that
\begin{equation}\label{partial}
\partial K=\{ \rho_K(u)u: u\in\uS\}.
\end{equation}

By \eqref{h}, \eqref{rho} and \eqref{partial}, the support function $h_K$ and the radial function $\rho_K$ have the following relationship:
\begin{equation*}
h_K(x)=\max_{u\in\uS}\langle u,x\rangle\rho_K(u),\quad x\in\uS,
\end{equation*}
and
\begin{equation*}
\rho_K(u)=\max_{x\in\uS}\langle u,x\rangle/h_K(x),\quad u\in\uS.
\end{equation*}
The definition of polar body, i.e.\eqref{ast}, shows that
\begin{equation}\label{dual}
\rho_K=1/h_{K^*},\quad h_K=1/\rho_{K^*}.
\end{equation}
From \eqref{dual}, it is clear that
\begin{equation}\label{self}
K^{**}=K.
\end{equation}

Let $\Omega\subset\uS$ be a closed set that is not contained in any closed hemisphere of $\uS$. And $h,\rho:\Omega\rightarrow(0,\infty)$ are continuous functions. The Wulff shape determined by $h$, is denoted by
\begin{equation*}
[h]=\bigcap_{x\in\Omega}\{y\in\R^n:x\cdot y\leq h(x)\},
\end{equation*}
and the convex hull $\langle\rho\rangle$ generated by $\rho$, is denoted by
\begin{equation*}
\langle\rho\rangle=\text{conv}\{\rho(u)u:u\in\Omega\}.
\end{equation*}
And a useful fact is that, see \cite{HLYZ.Acta.216-2016.325},
\begin{equation}\label{2.5}
[h]^*=\langle 1/h \rangle.
\end{equation}

If $h_K$ is the support function of a convex body $K$, then
\begin{equation*}
[h_K]=K,
\end{equation*}
and if $\rho_K$ is the radial function of a convex body $K$, then
\begin{equation}\label{2.6}
\langle\rho_K\rangle=K.
\end{equation}

Assume $h_t:\Omega\rightarrow (0,\infty)$ is a continuous function defined for $t\in(-\delta,\delta)$ by
\begin{equation}\label{ht}
\log h_t(x)=\log h(x)+tf(x)+o(t,x),
\end{equation}
where $f:\Omega\rightarrow\R$ is continuous, $\delta>0$ and $o(t,\cdot):(-\delta,\delta)\times \uS\rightarrow \R$ is continuous and for $x\in\Omega$, $\lim_{t\rightarrow0}\frac{o(t,x)}{t}=0$.
We shall write $[h_t]$ as $[h,f,t]$. And if $h$ is the support function of a convex body $K$, write $[h_t]$ as $[K,f,t]$.

Suppose $\rho_t:\Omega\rightarrow (0,\infty)$ is a continuous function defined for $t\in(-\delta,\delta)$ by
\begin{equation}\label{rt}
\log \rho_t(u)=\log \rho(u)+tg(u)+o(t,u),
\end{equation}
where $g:\Omega\rightarrow\R$ is continuous and $o(t,\cdot):(-\delta,\delta)\times \uS\rightarrow \R$ is continuous and for $u\in\Omega$, $\lim_{t\rightarrow0}\frac{o(t,u)}{t}=0$. We shall write $\langle\rho_t\rangle$ as $\langle\rho,g,t\rangle$. And if $\rho$ is the radial function of a convex body $K$, write $\langle\rho_t\rangle$ as $\langle K,g,t\rangle$.

The $L_p$ Minkowski sum is the key role in the $L_p$ Brunn-Minkowski theory.
For fixed $p\in\R$, suppose $K$, $L\in\mathcal{K}^n_0$ and $a$, $b\geq0$, the $L_p$ Minkowski combination, $a\cdot K +_p b\cdot L\in \mathcal{K}^n_0$, is defined by the Wulff shape
\begin{equation*}
a\cdot K +_p b\cdot L=[(ah_K^p+bh_L^p)^\frac{1}{p}],\quad p\neq0,
\end{equation*}
and for $p=0$,
\begin{equation*}
a\cdot K +_0 b\cdot L=[h_K^ah_L^b].
\end{equation*}
The Wulff shape allows us to consider the case $a$ or $b$ is negative, with strictly positive $ah_K^p+bh_L^p$. The $L_p$ harmonic combination, $a\cdot K\hat{+}_p b\cdot L\in\mathcal{K}^n_0$, is defined by
\begin{equation*}
a\cdot K\hat{+}_p b\cdot L=(a\cdot K^*+_p b\cdot L^*)^*.
\end{equation*}

If $\mu$ is a fixed non-zero finite Borel measure on $\uS$,
then we define
\begin{equation}\label{mup}
||f:\mu||_p = \bigg( \frac{1}{|\mu|}\int_{\uS} f^pd\mu\bigg)^\frac{1}{p},\quad p\neq0,
\end{equation}
and
\begin{equation}\label{mu0}
||f:\mu||_0 =\exp\bigg( \frac{1}{|\mu|}\int_{\uS} \log f d\mu\bigg), \quad \forall f\in C^+(\uS).
\end{equation}

For any $x_0\in\uS$ and $0<\delta<1$, $\omega_\delta(x_0)$ and $\omega'_\delta(x_0)$ are defined by
\begin{equation}\label{2.10}
\omega_\delta(x_0)=\{u\in\uS:\langle u,x_0\rangle\geq\delta\},
\end{equation}
\begin{equation}\label{2.11}
\omega'_\delta(x_0)=\{u\in\uS:|\langle u,x_0\rangle|\geq\delta\}.
\end{equation}
It is obvious that
\begin{equation*}
\omega'_\delta(x_0)=\omega_\delta(x_0)\cup\omega_\delta(-x_0).
\end{equation*}
And $x_0^\bot$ denotes the hyperplane whose normal is $x_0$ and the origin $0\in x_0^\bot$.

\begin{definition}
Suppose $K$, $L\in\mathcal{K}^n_0$, their Hausdorff metric is given by
\begin{equation*}
d(K,L)=\max\{\sup_{x\in K}dist(x,L),\sup_{y\in L}dist(K,y)\}.
\end{equation*}
where $dist(x,L)=\inf_{y\in L}dist(x,y)$ and $dist(x,y)$ is the distance of $x$ and $y$ in $\R^n$.
We say $K_i\rightarrow K_0$, which means $d(K_i,K_0)\rightarrow0$ as $i\rightarrow\infty$.
\end{definition}

It is important to recall the following Lemma in \cite{HLYZ.JDG.110-2018.1}.

\begin{lemma}\label{lemma2.1}
Suppose $0<\delta<1$, $K_i$ is a sequence of convex bodies in $\mathcal{K}^n_0$, and if for some $x_0\in\uS$,
\begin{equation*}
\lim_{i\rightarrow\infty}h_{K_i}(x_0)=0,
\end{equation*}
then
\begin{equation*}
\lim_{i\rightarrow\infty}\rho_{K_i}(u)=0, \quad \forall u\in\omega_\delta(x_0).
\end{equation*}
\end{lemma}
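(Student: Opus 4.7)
The plan is to exploit the defining inequality $\langle y, x\rangle \le h_K(x)$ for $y \in K$, applied to the specific boundary point $\rho_{K_i}(u)u$. Since the origin lies in the interior of every $K_i$, each boundary point is of the form $\rho_{K_i}(u)u$ for some $u \in \uS$, and in particular $\rho_{K_i}(u)u \in K_i$. Evaluating the support function at the fixed direction $x_0$ then yields
\begin{equation*}
\rho_{K_i}(u)\,\langle u, x_0\rangle \;=\; \langle \rho_{K_i}(u)u, x_0\rangle \;\le\; h_{K_i}(x_0).
\end{equation*}

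Next I would use the hypothesis $u \in \omega_\delta(x_0)$, which by \eqref{2.10} means $\langle u, x_0\rangle \ge \delta > 0$. Dividing the previous inequality by $\langle u, x_0\rangle$ gives the uniform bound
\begin{equation*}
\rho_{K_i}(u) \;\le\; \frac{h_{K_i}(x_0)}{\langle u, x_0\rangle} \;\le\; \frac{h_{K_i}(x_0)}{\delta},
\end{equation*}
valid for every $u \in \omega_\delta(x_0)$ and every $i$. Since $\rho_{K_i}(u) > 0$ by $K_i \in \mathcal{K}^n_0$, sending $i \to \infty$ and using the assumption $h_{K_i}(x_0) \to 0$ forces $\rho_{K_i}(u) \to 0$ for each $u \in \omega_\delta(x_0)$ (in fact, uniformly on $\omega_\delta(x_0)$).

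There is no real obstacle here; the lemma is essentially a quantitative form of the intuition that if the width of $K_i$ in direction $x_0$ collapses, then the radial extent of $K_i$ must also collapse in every direction that makes a strictly positive inner product with $x_0$. The only subtle point is the uniformity in $u$, which is automatic from the bound $\delta$ on $\langle u, x_0\rangle$, and the strict positivity $\delta > 0$ is exactly why the conclusion fails on the equator $\{u : \langle u, x_0\rangle = 0\}$.
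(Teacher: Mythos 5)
Your proof is correct and is essentially the standard argument: the paper itself does not prove this lemma but simply recalls it from \cite{HLYZ.JDG.110-2018.1}, where the proof is exactly your chain $\rho_{K_i}(u)\langle u,x_0\rangle\leq h_{K_i}(x_0)$ followed by division by $\langle u,x_0\rangle\geq\delta$. Nothing is missing; the uniform bound $\rho_{K_i}(u)\leq h_{K_i}(x_0)/\delta$ on $\omega_\delta(x_0)$ is precisely what is needed later in the compactness arguments of Lemmas \ref{lemma5.3} and \ref{lemma5.5}.
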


\section{$L_p$ Gauss image problem}

In this section, we review some of the standard facts on the radial Gauss map $\alpha_K$ and reverse radial Gauss map $\alpha_K^*$.

Suppose $K\in\mathcal{K}^n_0$ and $x\in \uS$, the supporting hyperplane of $K$ in direction $x$ is given by
\begin{equation*}
H_K(x)=\{y\in \mathbb{R}^n:\langle x, y\rangle=h_K(x)\},
\end{equation*}
and $x$ is called the normal vector of $K$ at $y$.

For $\sigma\subset \partial K$, the spherical image of $\sigma$, $\boldsymbol{\nu}_K(\sigma):\partial K\rightarrow \uS$, is defined by
\begin{equation*}
\boldsymbol{\nu}_K(\sigma)=\{x\in\uS: y\in H_K(x) \text{ for some } y\in\sigma\}.
\end{equation*}

Let $\sigma_K\subset \pd K$ be the set consisting of $y\in\pd K$, for which the set $\boldsymbol{\nu}_K(\{y\})$ contains more than a single element.
Define the spherical image map
\begin{equation*}
\nu_K:\pd K\backslash\sigma_K\rightarrow\uS
\end{equation*}
such that $\nu_K(y)$ is the unique element of $\boldsymbol{\nu}_K(\{y\})$.

For $\eta\subset \uS$, the reverse spherical image of $\eta$, \textbf{y}$_K(\eta):\uS\rightarrow \partial K$, is defined by
\begin{equation*}
\textbf{y}_K(\eta)=\{y\in\partial K: y\in H_K(x) \text{ for some } x\in\eta\}.
\end{equation*}

The set $\eta_K$ is made up of $x\in\uS$, for which the set $\textbf{y}_K(\{x\})$ contains more than a single element.  Define the reverse spherical image map
\begin{equation*}
y_K:\uS\backslash\eta_K\rightarrow\pd K
\end{equation*}
such that $y_K(x)$ is the unique element of $\textbf{y}_K(\{x\})$. It is well known that $\mathcal{H}^{n-1}(\sigma_K)=\mathcal{H}^{n-1}(\eta_K)=0$ and $\nu_K$, $y_K$ are continuous functions,
the details can be found in \cite{SR}.

Suppose $K\in\mathcal{K}^n_0$, the radial map of $K$, $r_K:\uS\rightarrow \partial K$, is defined by
\begin{equation*}
r_K(u)=\rho_K(u)u,
\end{equation*}
its reverse map $r^{-1}_K:\partial K\rightarrow \uS$ is given by
\begin{equation*}
r^{-1}_K(y)=\frac{y}{|y|}.
\end{equation*}

With the above preparation, we can define the radial Gauss image and reverse radial Gauss image. Specifically, let $\omega$ and $\eta$ be subsets of $\uS$, the radial Gauss image of $\omega$, $\boldsymbol{\alpha}_K(\omega)$, is defined by
\begin{equation*}
\boldsymbol{\alpha}_K(\omega)=\{x\in\mathbb{S}^{n-1}:  \rho_K(u)u\in H_K(x)\text{ for some }u\in \omega\};
\end{equation*}
and the reverse radial Gauss image of $\eta$, $\boldsymbol{\alpha}^\ast_K(\eta)$, is defined by
\begin{equation*}
\boldsymbol{\alpha}^\ast_K(\eta)=\{u\in\mathbb{S}^{n-1}: \rho_K(u)u\in H_K(x) \text{ for some } x\in \eta\}.
\end{equation*}

Assume $K\in\mathcal{K}^n_0$, the radial Gauss map of $K$, $\alpha_K:\uS\backslash\omega_K\rightarrow\uS$, is given by
\begin{equation*}
\alpha_K=\nu_{K}(r_K),
\end{equation*}
where $\omega_K=r^{-1}_K(\sigma_{K})$, and the reverse radial Gauss map of $K$, $\alpha^*_K:\uS\backslash\eta_K\rightarrow\uS$, is given by
\begin{equation*}
\alpha^*_K=r^{-1}_K(y_K).
\end{equation*}

It was shown in \cite{HLYZ.Acta.216-2016.325} that the reverse radial Gauss
image of $K$ and the radial Gauss image of $K^*$ are identical, that is
\begin{equation*}
\boldsymbol{\alpha}_{K^\ast}=\boldsymbol{\alpha}_K^\ast.
\end{equation*}
And the definitions of $\boldsymbol{\alpha}_K$ and $\boldsymbol{\alpha}_K^*$ yield that
\begin{equation*}
\boldsymbol{\alpha}_{tK}=\boldsymbol{\alpha}_K,\quad \boldsymbol{\alpha}^*_{tK}=\boldsymbol{\alpha}^*_K, \quad \forall t>0.
\end{equation*}

Recall the Gauss image
measure $\lambda(K,\cdot)$ of $\lambda$ via $K$ is  defined by
\begin{equation*}
\lambda(K,\omega)=\lambda(\boldsymbol{\alpha}_K(\omega)).
\end{equation*}

When $\lambda$ is a Borel measure which is absolutely continuous with respect to the spherical Lebusgus measure, the Gauss image measure is
a Borel measure. The integral representation of $\lambda(K,\cdot)$ can be found in \cite{B2020}.

\begin{lemma}\label{lemma3.1}
Assume $\lambda$ is a Borel measure which is absolutely continuous with respect to spherical Lebesgue measure and $K\in\mathcal{K}^n_0$, then
\begin{equation}\label{3.8}
\int_{\uS}f(u)d\lambda(K,u)=\int_{\uS}f(\alpha^\ast_K(x))d\lambda(x), \quad \forall f\in C(\uS).
\end{equation}
\end{lemma}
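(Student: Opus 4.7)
The plan is to verify the identity first on indicator functions, then promote it to arbitrary continuous $f$ by the standard measure-theoretic machinery. Concretely, I would first show that for every Borel $\omega \subset \uS$,
\begin{equation*}
\lambda(K,\omega) \;=\; \lambda\bigl(\boldsymbol{\alpha}_K(\omega)\bigr) \;=\; \lambda\bigl((\alpha_K^*)^{-1}(\omega)\bigr),
\end{equation*}
which is exactly \eqref{3.8} applied to $f = \chi_\omega$.

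The key step is to compare the two sets $\boldsymbol{\alpha}_K(\omega)$ and $(\alpha_K^*)^{-1}(\omega)$ off the exceptional set $\eta_K$ where $\alpha_K^*$ is not single-valued. By definition, $x \in \boldsymbol{\alpha}_K(\omega)$ iff there exists $u \in \omega$ with $\rho_K(u)u \in H_K(x)$. For $x \notin \eta_K$, the reverse radial Gauss map $\alpha_K^*(x)$ is the unique element of $\boldsymbol{\alpha}_K^*(\{x\})$, and this element is precisely the unique $u \in \uS$ with $\rho_K(u)u \in H_K(x)$. Hence for such $x$, the condition ``$x \in \boldsymbol{\alpha}_K(\omega)$'' reduces to ``$\alpha_K^*(x) \in \omega$,'' giving
\begin{equation*}
\boldsymbol{\alpha}_K(\omega) \setminus \eta_K \;=\; (\alpha_K^*)^{-1}(\omega) \setminus \eta_K.
\end{equation*}
Since $\mathcal{H}^{n-1}(\eta_K) = 0$ and $\lambda$ is absolutely continuous with respect to spherical Lebesgue measure, $\lambda(\eta_K) = 0$. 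Both sides therefore agree after throwing away a $\lambda$-null set, establishing the characteristic-function case.

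Given this base case, linearity of the integral gives the identity for all finite nonnegative simple Borel functions $f$. Monotone convergence then extends it to all bounded nonnegative Borel $f$, and splitting $f = f^+ - f^-$ handles arbitrary bounded Borel $f$. In particular, since $\uS$ is compact, every $f \in C(\uS)$ is bounded and Borel measurable, so \eqref{3.8} follows.

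The main obstacle, and the only step with real content, is the set-theoretic identification of $\boldsymbol{\alpha}_K(\omega)$ with $(\alpha_K^*)^{-1}(\omega)$ off the $\mathcal{H}^{n-1}$-null set $\eta_K$. Once this is in hand, the absolute continuity hypothesis on $\lambda$ is exactly what is needed to convert the $\mathcal{H}^{n-1}$-null exceptional set into a $\lambda$-null set; the remainder is routine approximation.
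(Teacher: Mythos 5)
Your proposal is correct and follows essentially the same route as the paper: both rest on the pointwise identity $\chi_\omega(\alpha_K^*(x))=\chi_{\boldsymbol{\alpha}_K(\omega)}(x)$ off an $\mathcal{H}^{n-1}$-null exceptional set (which absolute continuity converts to a $\lambda$-null set), followed by extension through simple functions to continuous $f$. The only cosmetic difference is that you pass through monotone convergence for bounded Borel functions while the paper approximates continuous $f$ by simple functions and invokes dominated convergence; you also correctly identify the exceptional set as $\eta_K$, where the paper's displayed equivalence writes $\omega_K$.
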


\begin{proof}
Note that for any $\omega\subset\uS$ and $x\in\uS\backslash\omega_K$,
\begin{equation*}
\alpha_K^*(x)\in\omega \Leftrightarrow x\in\boldsymbol{\alpha}_K(\omega),
\end{equation*}
which implies that
\begin{equation}\label{3.2}
\chi_{\omega}(\alpha_K^*(x))=\chi_{\boldsymbol{\alpha}_K(\omega)}(x).
\end{equation}
We first show \eqref{3.8} holds for simple function $$\varphi=\sum_{i}c_i\chi_{\omega_i},$$ where $c_i\in\R$ and $\chi_{\omega_i}$ is the characteristic function of Borel subset $\omega_i\subset\uS$.

Since $\lambda$ is absolutely continuous with respect to spherical Lebesgue measure, \eqref{3.2} shows that
\begin{equation}\label{3.10}
\int_{\uS}\chi_{\omega}(\alpha_K^*(x))d\lambda(x)= \int_{\uS}\chi_{\alpha_K(\omega)}(x)d\lambda(x),
\end{equation}
then \eqref{lambda} and \eqref{3.10} mean
\begin{eqnarray*}
\int_{\uS}\varphi(u) d\lambda(K,u)&=& \int_{\uS}\sum_{i}c_i\chi_{\omega_i}(u)d\lambda(K,u)\\
&=& \sum_{i}c_i\lambda(K,\omega_i)\\
&=& \sum_{i}c_i\lambda(\boldsymbol{\alpha}_K(\omega_i))\\
&=& \int_{\uS}\sum_{i}c_i\chi_{\boldsymbol{\alpha}_K(\omega_i)}(x)d\lambda(x)\\
&=& \int_{\uS}\sum_{i}c_i\chi_{\omega}(\alpha_K^*(x))d\lambda(x)\\
&=& \int_{\uS}\varphi(\alpha_{K}^*(x))d\lambda(x).
\end{eqnarray*}
Let $f:\uS\rightarrow\R$ be a continuous function, then there exists a sequence of simple functions $\{\varphi_k\}$ such that $\varphi_k\rightarrow f$ as $k\rightarrow\infty$. By the dominated convergence theorem, it holds
\begin{equation*}
\int_{\uS}f(u)d\lambda(K,u)=\int_{\uS}f(\alpha_K^\ast(x))d\lambda(x).
\end{equation*}
This finishes the proof.
\end{proof}

Inspired by \cite{HLYZ.JDG.110-2018.1}, we define the $L_p$ Gauss image measure $\lambda_p(K,\cdot)$ of $K\in\mathcal{K}^n_0$: fixed $p\in\R$, $\lambda_p(K,\cdot)$ is a Borel measure and satisfies
\begin{equation}\label{3.11}
\int_{\uS}f(u)d\lambda_p(K,u)=\int_{\uS}f(\alpha_K^\ast(x))\rho_K^p(\alpha_K^\ast(x))d\lambda(x),\quad\forall f\in C(\uS).
\end{equation}
By \eqref{3.8} and \eqref{3.11}, it is easy to see
\begin{equation*}
d\lambda_p(K,\cdot)=\rho_K^p d\lambda(K,\cdot).
\end{equation*}

It is worth to point out the Gauss image measure as a functional from $\mathcal{K}^n_0$ to the space of Borel measures on $\uS$ is weakly convergent with respect to the Hausdorff metric, see Lemma 3.4 in \cite{B2020}.
\begin{lemma}\label{lemma3.2}
Assume $\lambda$ is a Borel measure which is absolutely continuous with respect to spherical Lebesgue measure, and $K_0,K_1,\cdots\in\mathcal{K}^n_0$ such that $K_i\rightarrow K_0$ as $i\rightarrow\infty$, then $\lambda(K_i,\cdot)\rightharpoonup \lambda(K_0,\cdot)$.
\end{lemma}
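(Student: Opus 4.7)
The plan is to convert weak convergence of the Gauss image measures into pointwise convergence of the reverse radial Gauss maps and then apply dominated convergence on $(\uS, \lambda)$. By Lemma \ref{lemma3.1}, for each test function $f \in C(\uS)$,
\begin{equation*}
\int_{\uS} f(u) \, d\lambda(K_i, u) = \int_{\uS} f(\alpha_{K_i}^*(x)) \, d\lambda(x),
\end{equation*}
so the claim reduces to showing $\int_{\uS} f(\alpha_{K_i}^*(x)) \, d\lambda(x) \to \int_{\uS} f(\alpha_{K_0}^*(x)) \, d\lambda(x)$. Since $|f(\alpha_{K_i}^*(x))| \leq \|f\|_\infty$, the matter collapses to establishing $\alpha_{K_i}^*(x) \to \alpha_{K_0}^*(x)$ for $\lambda$-almost every $x$.

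I would verify this pointwise convergence for every $x \in \uS \setminus \eta_{K_0}$ by a compactness argument. Fix such $x$ and consider any selection $y_i \in \mathbf{y}_{K_i}(\{x\}) \subset \partial K_i$. Because $K_i \to K_0$ in the Hausdorff metric, the $K_i$ are eventually contained in a common compact set, so $\{y_i\}$ is bounded. For any convergent subsequence $y_{i_k} \to y^*$, the relations $y_{i_k} \in K_{i_k}$ and $\langle y_{i_k}, x \rangle = h_{K_{i_k}}(x)$ pass to the limit using closedness of the Hausdorff limit and continuity of $K \mapsto h_K(x)$, giving $y^* \in H_{K_0}(x) \cap K_0$. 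Since $x \notin \eta_{K_0}$ this intersection is the singleton $\{y_{K_0}(x)\}$, so every subsequential limit equals $y_{K_0}(x)$ and therefore $y_i \to y_{K_0}(x)$. Post-composing with $y \mapsto y/|y|$, which is continuous at $y_{K_0}(x) \neq 0$ (the origin is interior to $K_0$), yields $\alpha_{K_i}^*(x) \to \alpha_{K_0}^*(x)$.

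Finally, $\mathcal{H}^{n-1}(\eta_{K_0}) = 0$ combined with the absolute continuity assumption $\lambda \ll \mathcal{H}^{n-1}$ gives $\lambda(\eta_{K_0}) = 0$, so the pointwise convergence holds $\lambda$-a.e.; dominated convergence with majorant $\|f\|_\infty$ then closes the argument. The main obstacle is the pointwise step, in particular the subtle point that $y_{K_i}$ is defined only outside its own singular set $\eta_{K_i}$. This is harmless: either one restricts to $x$ lying outside the countable union $\eta_{K_0} \cup \bigcup_i \eta_{K_i}$, which is still a $\lambda$-null set, or one invokes the selection argument above, whose only inputs are $y_i \in K_i$ and $\langle y_i, x\rangle = h_{K_i}(x)$ and which therefore never uses uniqueness of $y_i$.
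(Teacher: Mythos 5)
Your proof is correct. The paper does not actually write out a proof of this lemma—it defers to Lemma 3.4 of \cite{B2020}—and the argument there is essentially the one you give: almost-everywhere convergence of $\alpha_{K_i}^*$ to $\alpha_{K_0}^*$ (a fact going back to \cite{HLYZ.Acta.216-2016.325}, proved by exactly your subsequential-selection argument on the reverse spherical image points), combined with the integral representation of Lemma \ref{lemma3.1} and dominated convergence; your handling of the singular sets $\eta_{K_i}$ via the countable union of $\mathcal{H}^{n-1}$-null sets is the right way to make the pointwise statement rigorous.
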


\section{Variational formulas for entropy of convex bodies}

In this section, variational formulas for the general entropy of convex bodies will be given. Recall the general entropy
\begin{equation*}
G(K)=-\int_{\uS}\log h_K(x)d\lambda(x),
\end{equation*}
and the dual general entropy, defined by
\begin{equation}\label{ed}
E(K)=\int_{\uS}\log\rho_K(u)d\lambda(u),
\end{equation}
it is clear that for any $K\in\mathcal{K}^n_0$,
\begin{equation}\label{gk}
E(K^*)=G(K).
\end{equation}

As will be shown, the following Lemma in \cite{HLYZ.Acta.216-2016.325} turns out to be a critical property.
\begin{lemma}\label{lemma4.1}
Let $\Omega\subset\uS$ be a closed set that is not contained in any closed
hemisphere of $\uS$. If $\rho_t$ is a logarithmic family of convex hulls of $\langle\rho_0,g,t\rangle$, then
\begin{equation*}
\lim_{t\rightarrow0}\frac{\log h_{\langle\rho_t\rangle}(x)-\log h_{\langle\rho_0\rangle}(x)}{t}=
g(\alpha_{\langle\rho_0\rangle}^*(x)),\quad \forall x\in \uS\backslash\omega_{\langle\rho_0\rangle}.
\end{equation*}
Furthermore, there exist $\delta>0$ and $M>0$, such that
\begin{equation*}
|\log h_{\langle\rho_t\rangle}(x)-\log h_{\langle\rho_0\rangle}(x)|\leq M|t|,\quad \forall (x,t)\in\uS\times (-\delta,\delta).
\end{equation*}
\end{lemma}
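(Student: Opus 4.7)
The plan is to express the support function of the convex hull as a pointwise maximum over $\Omega$ and then differentiate through the maximum by combining an envelope-type argument, the uniqueness built into the reverse radial Gauss map, and the uniform control coming from $o(t,u)/t\to 0$.

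First, I would exploit the basic identity
\begin{equation*}
h_{\langle\rho_t\rangle}(x)=\max_{u\in\Omega}\,\langle u,x\rangle\,\rho_t(u),
\end{equation*}
which follows directly from $\langle\rho_t\rangle=\mathrm{conv}\{\rho_t(u)u:u\in\Omega\}$ together with the hypothesis that $\Omega$ is not contained in any closed hemisphere (so the maximum is strictly positive and is realized at some $u$ with $\langle u,x\rangle>0$). Substituting $\log\rho_t(u)=\log\rho_0(u)+tg(u)+o(t,u)$ from \eqref{rt} yields
\begin{equation*}
\log h_{\langle\rho_t\rangle}(x)=\max_{u\in\Omega,\,\langle u,x\rangle>0}\bigl[\log\langle u,x\rangle+\log\rho_0(u)+tg(u)+o(t,u)\bigr].
\end{equation*}
For $x\in\uS\setminus\omega_{\langle\rho_0\rangle}$ set $u^{\ast}:=\alpha^{\ast}_{\langle\rho_0\rangle}(x)$. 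By the very definition of the reverse radial Gauss map, $\rho_0(u^{\ast})u^{\ast}$ is the unique boundary point of $\langle\rho_0\rangle$ at which the supporting hyperplane with outer normal $x$ is attained, so $u^{\ast}$ is the unique maximizer at $t=0$. A standard compactness and upper-semicontinuity argument then shows that every choice of maximizer $u_t\in\Omega$ for time $t$ converges to $u^{\ast}$ as $t\to 0$.

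For the derivative formula I would sandwich the difference quotient. Feeding the competitor $u=u^{\ast}$ into the time-$t$ maximum gives the lower bound
\begin{equation*}
\log h_{\langle\rho_t\rangle}(x)-\log h_{\langle\rho_0\rangle}(x)\ge tg(u^{\ast})+o(t,u^{\ast}),
\end{equation*}
while feeding the time-$t$ maximizer $u_t$ into the time-$0$ maximum yields
\begin{equation*}
\log h_{\langle\rho_t\rangle}(x)-\log h_{\langle\rho_0\rangle}(x)\le tg(u_t)+o(t,u_t).
\end{equation*}
Dividing by $t$ with the correct sign, using $u_t\to u^{\ast}$, continuity of $g$, and the fact that $o(t,u)/t\to 0$ uniformly in $u\in\Omega$ (built into the definition of a logarithmic family \eqref{rt}), both sides are squeezed to $g(u^{\ast})=g(\alpha^{\ast}_{\langle\rho_0\rangle}(x))$, which is the claimed derivative.

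For the uniform Lipschitz estimate I would observe that $g$ is bounded on the compact set $\Omega$ and the error term $o(t,u)$ is uniformly $o(t)$ in $u$, so there exist $\delta>0$ and $M>0$ with $|\log\rho_t(u)-\log\rho_0(u)|\le M|t|$ for all $u\in\Omega$ and $|t|<\delta$. Since $\log h_{\langle\rho_t\rangle}(x)$ and $\log h_{\langle\rho_0\rangle}(x)$ are maxima of two functions differing pointwise by at most $M|t|$ over the same index set, this uniform pointwise estimate transfers to the maxima, yielding the bound on all of $\uS\times(-\delta,\delta)$. The main technical obstacle I anticipate is establishing the convergence of maximizers $u_t\to u^{\ast}$: ruling out a competing asymptotic maximizer elsewhere on $\Omega$ requires carefully combining uniqueness of $\alpha^{\ast}_{\langle\rho_0\rangle}(x)$ with the uniform smallness of the perturbation, since without this step the sandwich above collapses.
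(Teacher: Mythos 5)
Your argument is essentially correct, but note that there is no proof in the paper to compare it against: Lemma \ref{lemma4.1} is imported verbatim from \cite{HLYZ.Acta.216-2016.325}, where it is obtained by first proving Aleksandrov's variational lemma for logarithmic families of Wulff shapes and then transferring the result to convex hulls via the duality $[h]^{*}=\langle 1/h\rangle$. You instead work directly with the envelope formula $h_{\langle\rho_t\rangle}(x)=\max_{u\in\Omega}\langle u,x\rangle\,\rho_t(u)$, which is the mirror image of that classical argument and is perfectly sound: the identification of the unique maximizer at $t=0$ with $u^{*}=\alpha^{*}_{\langle\rho_0\rangle}(x)$, the compactness argument giving $u_t\to u^{*}$, and the two-sided sandwich all go through. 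One small step worth writing out is that any maximizer $u_0\in\Omega$ of $\langle u,x\rangle\rho_0(u)$ automatically satisfies $\rho_{\langle\rho_0\rangle}(u_0)=\rho_0(u_0)$ and places $\rho_0(u_0)u_0$ on the supporting hyperplane, so for $x$ outside the exceptional set it must coincide with $\alpha^{*}_{\langle\rho_0\rangle}(x)$ (and, incidentally, the exceptional set should be $\eta_{\langle\rho_0\rangle}$ rather than $\omega_{\langle\rho_0\rangle}$ as printed). The one caveat concerns your appeal to uniformity of $o(t,u)/t\to 0$ in $u$: as written in \eqref{rt} of this paper the limit is asserted only pointwise, and with merely pointwise convergence both the step $o(t,u_t)/t\to 0$ along the moving maximizers and the uniform Lipschitz bound would fail. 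You are right that uniform convergence is built into the definition of a logarithmic family in \cite{HLYZ.Acta.216-2016.325}; the formulation of \eqref{rt} here is simply imprecise, and your proof correctly isolates the hypothesis that is actually needed.
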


It is sufficient to make the following Lemma together with Lemma \ref{lemma3.1} and Lemma \ref{lemma4.1}.
\begin{lemma}\label{lemma4.2}
Assume $\lambda$ is a Borel measure which is absolutely continuous with respect to spherical Lebesgue measure. Let $K\in\mathcal{K}^n_0$ and $f,g:\uS\rightarrow\R$ be two continuous functions, if $\rho_t$ is given by \eqref{rt}, then
\begin{equation}\label{4.3}
\frac{d}{dt}G(\langle \rho_t \rangle)\bigg|_{t=0}=-\int_{\uS}g(u)d\lambda(K,u),
\end{equation}
and if $h_t$ is given by \eqref{ht}, then
\begin{equation}\label{4.4}
\frac{d}{dt}E([h_t])\bigg|_{t=0}=\int_{\uS}f(x)d\lambda(K^*,x).
\end{equation}
\end{lemma}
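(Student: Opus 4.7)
For the first identity \eqref{4.3}, the plan is to differentiate under the integral sign. Since $\rho_0 = \rho_K$, equation \eqref{2.6} gives $\langle \rho_0 \rangle = K$, and the curve $\rho_t$ defined by \eqref{rt} is exactly a logarithmic family of convex hulls in the sense of Lemma \ref{lemma4.1}. That lemma supplies two ingredients: the pointwise limit
\[
\lim_{t\to 0}\frac{\log h_{\langle\rho_t\rangle}(x) - \log h_K(x)}{t} = g\!\left(\alpha_K^{\ast}(x)\right)
\]
for every $x \in \uS \setminus \omega_K$, and a uniform Lipschitz bound $|\log h_{\langle\rho_t\rangle}(x) - \log h_K(x)| \leq M|t|$ on $\uS \times (-\delta,\delta)$. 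Because $\mathcal{H}^{n-1}(\omega_K) = 0$ and $\lambda$ is absolutely continuous with respect to spherical Lebesgue measure, $\lambda(\omega_K) = 0$, so the limit holds $\lambda$-a.e. The Lipschitz bound furnishes an integrable dominating function (the constant $M$, using finiteness of $\lambda$), so the dominated convergence theorem yields
\[
\frac{d}{dt}G(\langle\rho_t\rangle)\bigg|_{t=0} = -\int_{\uS} g(\alpha_K^{\ast}(x))\, d\lambda(x).
\]
Finally, Lemma \ref{lemma3.1} converts the right-hand side into $-\int_{\uS} g(u)\, d\lambda(K,u)$, giving \eqref{4.3}.

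For the second identity \eqref{4.4}, the plan is to reduce to the first identity via polar duality. Using \eqref{gk}, $E([h_t]) = G([h_t]^{\ast})$, and from \eqref{2.5} we have $[h_t]^{\ast} = \langle 1/h_t \rangle$. Set $\tilde{\rho}_t := 1/h_t$; then by \eqref{dual}, $\tilde{\rho}_0 = 1/h_K = \rho_{K^{\ast}}$, and taking $-\log$ of \eqref{ht} gives
\[
\log \tilde{\rho}_t(x) = \log \rho_{K^{\ast}}(x) + t(-f(x)) + o(t,x),
\]
so $\tilde{\rho}_t$ is a logarithmic family of the form \eqref{rt} based at $K^{\ast}$ with perturbation function $-f$. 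Applying the already-established identity \eqref{4.3} to $K^{\ast}$ with $g$ replaced by $-f$,
\[
\frac{d}{dt}E([h_t])\bigg|_{t=0} = \frac{d}{dt}G(\langle\tilde{\rho}_t\rangle)\bigg|_{t=0} = -\int_{\uS}(-f(x))\, d\lambda(K^{\ast},x) = \int_{\uS} f(x)\, d\lambda(K^{\ast},x),
\]
which is \eqref{4.4}.

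The main obstacle is the justification of interchanging limit and integral in the first part, since the pointwise derivative is only defined off the singular set $\omega_K$. This is precisely where absolute continuity of $\lambda$ with respect to Lebesgue measure is essential, and where the second half of Lemma \ref{lemma4.1} (the uniform bound) is needed to invoke dominated convergence. Once \eqref{4.3} is in hand, the second identity is a purely formal consequence of polarity via \eqref{gk} and \eqref{2.5}, with no further analytic work required.
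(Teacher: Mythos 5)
Your proposal is correct and follows exactly the route the paper intends: the paper gives no written proof of Lemma \ref{lemma4.2}, stating only that it follows from Lemma \ref{lemma3.1} and Lemma \ref{lemma4.1}, and your argument (dominated convergence using the pointwise limit and uniform bound of Lemma \ref{lemma4.1}, absolute continuity of $\lambda$ to discard $\omega_K$, the change of variables of Lemma \ref{lemma3.1}, and polarity via \eqref{gk} and \eqref{2.5} for the second identity) is precisely the standard filling-in of that claim.
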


It remains to prove the variational formulas applying the Lemma \ref{lemma4.2} in this section.
\begin{lemma}
Assume $\lambda$ is a Borel measure which is absolutely continuous with respect to spherical Lebesgue measure and $K,L\in\mathcal{K}^n_0$. Thus for $p\neq0$,
\begin{equation}\label{4.5}
\frac{d}{dt}G(K\hat{+}_p t\cdot L)\bigg|_{t=0}=\frac{1}{p}\int_{\uS}\rho_{L}^{-p}(u)d\lambda_p(K,u),
\end{equation}
and for $p=0$,
\begin{equation}\label{4.6}
\frac{d}{dt}G(K\hat{+}_0 t\cdot L)\bigg|_{t=0}=-\int_{\uS}\log\rho_{L}(u)d\lambda(K,u).
\end{equation}
\end{lemma}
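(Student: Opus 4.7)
The plan is to reduce the statement to Lemma \ref{lemma4.2} via duality. By \eqref{gk} together with the definition $K\hat{+}_p t\cdot L=(K^*+_p t\cdot L^*)^*$ and \eqref{self}, one has $G(K\hat{+}_p t\cdot L)=E(K^*+_p t\cdot L^*)$. Thus it suffices to realize $K^*+_p t\cdot L^*$ as a Wulff shape $[h_t]$ whose $\log h_t$ fits the form \eqref{ht} with base support function $h=h_{K^*}$, and then apply \eqref{4.4}. Since the body playing the role of ``$K$'' in Lemma \ref{lemma4.2} is here $K^*$, the output measure on the right of \eqref{4.4} will be $\lambda((K^*)^*,\cdot)=\lambda(K,\cdot)$ by \eqref{self}.

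For $p\neq 0$, write $h_t=(h_{K^*}^p+t h_{L^*}^p)^{1/p}$, which is strictly positive on $\uS$ for small $|t|$ since $h_{K^*}$ and $h_{L^*}$ are bounded below by positive constants. Taylor expansion in $t$ gives
$$\log h_t(x)=\log h_{K^*}(x)+\frac{t}{p}\cdot\frac{h_{L^*}^p(x)}{h_{K^*}^p(x)}+o(t,x),$$
matching \eqref{ht} with $f(x)=\frac{1}{p}\,h_{L^*}^p(x)\,h_{K^*}^{-p}(x)$. Applying \eqref{4.4} then produces $\int_{\uS}f(x)\,d\lambda(K,x)$. Rewriting with the duality relations $h_{K^*}=1/\rho_K$ and $h_{L^*}=1/\rho_L$ from \eqref{dual}, the integrand becomes $\tfrac{1}{p}\rho_L^{-p}(x)\rho_K^p(x)$, and \eqref{lambda_p} converts $\rho_K^p\,d\lambda(K,\cdot)$ into $d\lambda_p(K,\cdot)$, yielding exactly \eqref{4.5}.

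For $p=0$, the Wulff shape definition gives $K^*+_0 t\cdot L^*=[h_{K^*}h_{L^*}^t]$, so one may take $h_t=h_{K^*}h_{L^*}^t$, which gives directly $\log h_t=\log h_{K^*}+t\log h_{L^*}$ with vanishing remainder. This fits \eqref{ht} with $f=\log h_{L^*}=-\log\rho_L$ by \eqref{dual}, and \eqref{4.4} immediately gives
$$\frac{d}{dt}G(K\hat{+}_0 t\cdot L)\Big|_{t=0}=-\int_{\uS}\log\rho_L(u)\,d\lambda(K,u),$$
which is \eqref{4.6}.

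The only technical point that is not purely formal is to check that the remainder $o(t,x)$ in the $p\neq 0$ expansion is continuous in $(t,x)$ on $(-\delta,\delta)\times\uS$ with $o(t,x)/t\to 0$ as $t\to 0$ for each fixed $x$. This follows from the smoothness of $s\mapsto\frac{1}{p}\log(a^p+sb^p)$ near $s=0$ combined with the compactness of $\uS$ and uniform positive lower and upper bounds on $h_{K^*}$ and $h_{L^*}$; I do not anticipate any deeper obstacle, the rest being bookkeeping in duality.
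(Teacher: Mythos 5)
Your proposal is correct and follows essentially the same route as the paper: both reduce $G(K\hat{+}_p t\cdot L)=E(K^*+_p t\cdot L^*)$ to formula \eqref{4.4} of Lemma \ref{lemma4.2} by realizing $K^*+_p t\cdot L^*$ as a Wulff shape with variation function $\tfrac{1}{p}(h_{L^*}/h_{K^*})^p$, then translate via \eqref{dual}, \eqref{self} and \eqref{lambda_p}. The only cosmetic difference is that the paper first computes the derivative for general $K,L$ and then substitutes $K^*,L^*$, whereas you substitute from the outset; the $p=0$ case is handled identically.
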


\begin{proof}
For $p\neq0$, let
\begin{equation*}
h_t=(h_K^p+th_L^p)^\frac{1}{p},
\end{equation*}
and choose $\delta>0$ such that
\begin{equation*}
\log h_t=\log h_K+\frac{1}{p} (\frac{h_L}{h_K})^p t+o_p(t,\cdot),
\end{equation*}
where $o_p:(-\delta,\delta)\times \uS\rightarrow \R$ is continuous and satisfies
\begin{equation*}
\lim_{t\rightarrow0}\frac{o_p(t,x)}{t}=0,\quad  \forall x\in\uS.
\end{equation*}
Choosing $$f=\frac{1}{p}(\frac{h_L}{h_K})^p,$$
we have
$$K+_pt\cdot L=[h_t]=[K,f,t].$$
Then \eqref{4.4} amounts to the fact that
\begin{eqnarray*}
\frac{d}{dt}E(K+_p t\cdot L)\bigg|_{t=0}&=&\frac{1}{p}\int_{\uS}(\frac{h_L}{h_K})^p(x)d\lambda(K^*,x).
\end{eqnarray*}
Replace $K,L$ by $K^*,L^*$ yields
\begin{eqnarray}\label{2.4}
\frac{d}{dt}E(K^*+_p t\cdot L^*)\bigg|_{t=0}&=&\frac{1}{p}\int_{\uS}(\frac{h_{L^*}}{h_{K^*}})^p(u)d\lambda(K^{**},u).
\end{eqnarray}
Based on \eqref{dual}, \eqref{self} and \eqref{2.4}, we can assert that
\begin{eqnarray}\label{4.8}
\frac{d}{dt}E(K^*+_p t\cdot L^*)\bigg|_{t=0}&=&\frac{1}{p}\int_{\uS}(\frac{\rho_{K}}{\rho_{L}})^p(u)d\lambda(K,u).
\end{eqnarray}
Thus \eqref{4.5} can be obtained by \eqref{lambda_p}, \eqref{gk} and \eqref{4.8}.

We now turn to the case $p=0$.
Set $h_t=h_K h_L^t$, thus we get $$\log h_t=\log h_K+t\log h_L.$$

Since the proof for the case $p=0$ is similar in spirit to the case $p\neq0$, we omit the detials here.
\end{proof}

\section{Existence of solutions to the $L_p$ Gauss image problem}

For given Borel measures $\lambda$ and $\mu$, and $p\neq0$, define the functional $\Phi_{\lambda,\mu,p}:C^+(\uS)\rightarrow \R$, for any $f\in C^+(\uS)$,
\begin{eqnarray}\label{phi}
\Phi_{\lambda,\mu,p}(f)&=&-\frac{1}{|\lambda|}\int_{\uS}\log h_{\langle f\rangle}(x)d\lambda(x)
-\frac{1}{p}\log(\frac{1}{|\mu|}\int_{\uS} f^{-p}(u)d\mu(u))\\ \nonumber
&=&G(\langle f\rangle)/|\lambda|+\log||f:\mu||_{-p}.
\end{eqnarray}
It is easy to check that $\Phi_{\lambda,\mu,p}(f)$ is homogeneous of degree 0, that is
\begin{equation}\label{5.2}
\Phi_{\lambda,\mu,p}(tf)=\Phi_{\lambda,\mu,p}(f), \quad \forall t>0,  f\in C^+(\uS)
\end{equation}

The maximization problem is:
\begin{equation}\label{5.3}
\sup\{\Phi_{\lambda,\mu,p}(f):f\in C^+(\uS)\}.
\end{equation}

The following Lemma shows that the solution of maximization problem \eqref{5.3} must be radial function of a convex body $K$.
\begin{lemma}\label{lemma5.1}
Suppose $p\neq0$, then a convex body $K\in\mathcal{K}^n_0$ is a solution of the maximization problem
\begin{equation*}
\sup\{\Phi_{\lambda,\mu,p}(\rho_K):K\in \mathcal{K}^n_0\}
\end{equation*}
if and only if $\rho_K$ is a solution of the maximization problem
\begin{equation*}
\sup\{\Phi_{\lambda,\mu,p}(f):f\in C^+(\uS)\}.
\end{equation*}
\end{lemma}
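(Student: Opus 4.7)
The strategy is to show that the two supremum problems have the same value and the same maximizers by proving the pointwise estimate $\Phi_{\lambda,\mu,p}(f) \leq \Phi_{\lambda,\mu,p}(\rho_{\langle f\rangle})$ for every $f \in C^+(\uS)$. Since by \eqref{2.6} we have $\langle \rho_K\rangle = K$ for every $K\in \mathcal{K}^n_0$, the family $\{\rho_K : K\in \mathcal{K}^n_0\}$ is a subclass of $C^+(\uS)$, so the nontrivial direction is to push an arbitrary continuous positive function up to a radial function without decreasing $\Phi_{\lambda,\mu,p}$.

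The plan is as follows. First I would fix $f\in C^+(\uS)$ and set $K_f := \langle f\rangle$. By the very definition of the convex hull, $f(u)u \in K_f$ for every $u\in \uS$, so by \eqref{rho} we get the pointwise inequality
\begin{equation*}
f(u) \leq \rho_{K_f}(u), \quad \forall u\in \uS.
\end{equation*}
On the other hand, the first term of $\Phi_{\lambda,\mu,p}$ depends on $f$ only through the convex body $\langle f\rangle$, and since $\langle \rho_{K_f}\rangle = K_f$ by \eqref{2.6}, the quantity $\int_{\uS}\log h_{\langle \cdot\rangle}\,d\lambda$ takes the same value at $f$ and at $\rho_{K_f}$.

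The second step is a sign-tracking argument for the remaining term $-\tfrac{1}{p}\log\bigl(\tfrac{1}{|\mu|}\int_{\uS} f^{-p}\,d\mu\bigr)$. If $p>0$, then $-p<0$ and $f\leq \rho_{K_f}$ yield $f^{-p}\geq \rho_{K_f}^{-p}$, so the integral (and its logarithm) is larger for $f$ than for $\rho_{K_f}$; multiplying by $-1/p<0$ reverses the inequality, giving
\begin{equation*}
-\frac{1}{p}\log\!\Bigl(\tfrac{1}{|\mu|}\!\int_{\uS}\! f^{-p}d\mu\Bigr)
\leq -\frac{1}{p}\log\!\Bigl(\tfrac{1}{|\mu|}\!\int_{\uS}\! \rho_{K_f}^{-p}d\mu\Bigr).
\end{equation*}
If $p<0$, then $-p>0$ and $f\leq \rho_{K_f}$ yield $f^{-p}\leq \rho_{K_f}^{-p}$; now the logarithm is smaller for $f$ and multiplication by $-1/p>0$ preserves the direction, giving the same conclusion. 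Combining with the equality of first terms yields $\Phi_{\lambda,\mu,p}(f)\leq \Phi_{\lambda,\mu,p}(\rho_{K_f})$.

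Finally I would draw the conclusions. For the ``if'' direction, any $\rho_K$ maximizing over $C^+(\uS)$ automatically maximizes over the subset $\{\rho_L : L\in \mathcal{K}^n_0\}$. For the ``only if'' direction, suppose $K\in \mathcal{K}^n_0$ maximizes $\Phi_{\lambda,\mu,p}(\rho_L)$ over $L\in \mathcal{K}^n_0$; then for arbitrary $f\in C^+(\uS)$ the previous step gives $\Phi_{\lambda,\mu,p}(f)\leq \Phi_{\lambda,\mu,p}(\rho_{K_f})\leq \Phi_{\lambda,\mu,p}(\rho_K)$, proving optimality over $C^+(\uS)$. There is no serious obstacle here: the real content is the elementary observation $f\leq \rho_{\langle f\rangle}$ together with careful bookkeeping of the sign of $-1/p$; the only thing to be slightly careful about is not to mix up the two cases $p>0$ and $p<0$ when taking logarithms and dividing.
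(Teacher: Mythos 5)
Your proposal is correct and follows essentially the same route as the paper: the pointwise inequality $f\leq\rho_{\langle f\rangle}$, the invariance of the entropy term under $f\mapsto\rho_{\langle f\rangle}$ via $\langle\rho_{\langle f\rangle}\rangle=\langle f\rangle$, and the resulting estimate $\Phi_{\lambda,\mu,p}(f)\leq\Phi_{\lambda,\mu,p}(\rho_{\langle f\rangle})$. The only difference is presentational: the paper compresses your two-case sign-tracking into the single monotonicity statement $||\rho_{\langle f\rangle}:\mu||_{-p}\geq||f:\mu||_{-p}$, and it leaves the final ``if and only if'' bookkeeping implicit, which you spell out.
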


\begin{proof}
The convex hull is given by $$\langle f \rangle = \text{conv} \{f(u)u:u\in\uS\},\quad \forall f\in C^+(\uS),$$ then it is easily seen that
\begin{equation*}
\rho_{\langle f \rangle}\geq f,
\end{equation*}
which implies that
\begin{equation}\label{5.4}
||\rho_{\langle f \rangle}:\mu||_{-p}\geq ||f:\mu||_{-p}.
\end{equation}
By \eqref{2.6}, that is
\begin{equation*}
\langle \rho_{\langle f \rangle} \rangle = \langle f \rangle,
\end{equation*}
hence
\begin{equation}\label{5.5}
G(\langle \rho_{\langle f \rangle} \rangle)=G(\langle f \rangle).
\end{equation}
Applying \eqref{5.4} and \eqref{5.5} we conclude that
\begin{equation*}
\Phi_{\lambda,\mu,p}(f)\leq\Phi_{\lambda,\mu,p}(\rho_{\langle f \rangle}).
\end{equation*}
\end{proof}
We have divided the proof into a sequence of Lemmas.

\subsection{The proof of Theorem \ref{main1}}

\

In this subsection, we deal with the case $p>0$.
\begin{lemma}\label{lemma5.3}
Under the assumptions of Theorem \ref{main1}, there exists a convex body $K_0\in\mathcal{K}^n_0$ such that
\begin{equation*}
\sup\{\Phi_{\lambda,\mu,p}(\rho_K):K\in\mathcal{K}^n_0\}=\Phi_{\lambda,\mu,p}(\rho_{K_0}).
\end{equation*}
\end{lemma}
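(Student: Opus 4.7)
The plan is to apply the direct method of the calculus of variations. By the homogeneity \eqref{5.2}, I may restrict to convex bodies $K\in\mathcal{K}^n_0$ normalized so that $\max_u\rho_K(u)=1$, i.e., $K$ is contained in the closed unit ball. I take a maximizing sequence $\{K_i\}$ in this normalized class — the supremum is bounded below by $\Phi_{\lambda,\mu,p}(\one)=0$ — and apply the Blaschke selection theorem to extract a Hausdorff-convergent subsequence $K_i\to K_0$. Continuity of the outer radius gives $\max\rho_{K_0}=1$, so $K_0\neq\{0\}$. The main step is to show $0\in\mathrm{int}(K_0)$; once this is done, $K_0\in\mathcal{K}^n_0$, and the uniform convergences $h_{K_i}\to h_{K_0}$, $\rho_{K_i}\to\rho_{K_0}$ (with positive limits bounded below) yield $\Phi_{\lambda,\mu,p}(\rho_{K_i})\to\Phi_{\lambda,\mu,p}(\rho_{K_0})$ by dominated convergence, so $K_0$ is the required maximizer.

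Suppose, for contradiction, that $0\notin\mathrm{int}(K_0)$. Then $F:=\{x\in\uS:h_{K_0}(x)=0\}$ is nonempty and a proper closed subset of $\uS$ (since $\max\rho_{K_0}=1$ forces $h_{K_0}$ to hit $1$). Let $\eta_i:=\min_x h_{K_i}(x)$, attained at $x_i'\in\uS$; by uniform convergence $\eta_i\to 0$ and, passing to a subsequence, $x_i'\to x^*\in F$. For any $\delta\in(0,1)$ and $u\in\omega_\delta(x_i')$, the inclusion $\rho_{K_i}(u)u\in K_i$ gives $\delta\rho_{K_i}(u)\leq \rho_{K_i}(u)\langle u,x_i'\rangle\leq h_{K_i}(x_i')=\eta_i$, i.e., $\rho_{K_i}(u)\leq \eta_i/\delta$ on $\omega_\delta(x_i')$ — a quantitative form of Lemma \ref{lemma2.1}. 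Since $\mu$ is not concentrated in any closed hemisphere, $\mu(\{u:\langle u,x^*\rangle>0\})>0$, so there is $\delta\in(0,1)$ with $\mu(\omega_\delta(x^*))>0$, and for $i$ sufficiently large $\omega_{\delta/2}(x_i')\supset\omega_\delta(x^*)$, hence $\mu(\omega_{\delta/2}(x_i'))$ is bounded below by a positive constant. Therefore
\begin{equation*}
-\frac{1}{p}\log\!\left(\frac{1}{|\mu|}\int_{\uS}\rho_{K_i}^{-p}\,d\mu\right) \leq -\frac{1}{p}\log\!\left(\frac{\mu(\omega_{\delta/2}(x_i'))}{|\mu|}\Big(\frac{\delta/2}{\eta_i}\Big)^{p}\right) = \log\eta_i + C_\delta \to -\infty.
\end{equation*}

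For the first term I use the trivial bound $h_{K_i}\geq\eta_i$. Since $\lambda$ assigns positive measure to every nonempty open set, $\lambda(\uS\setminus F)>0$. Because $F$ is the intersection of $\uS$ with the convex outer-normal cone of $K_0$ at the origin, its boundary in $\uS$ has zero spherical Lebesgue measure, so the open tubular neighborhoods $F_\sigma:=\{x:d(x,F)<\sigma\}$ satisfy $\lambda(F_\sigma)\to\lambda(F)$ as $\sigma\to 0^+$. On $\uS\setminus F_\sigma$, $h_{K_0}$ is bounded below by some $\eta_0(\sigma)>0$, hence eventually $h_{K_i}\geq\eta_0(\sigma)/2$ there; on $F_\sigma$, $-\log h_{K_i}\leq-\log\eta_i$. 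This yields
\begin{equation*}
T_1(K_i):=-\frac{1}{|\lambda|}\int_{\uS}\log h_{K_i}\,d\lambda \leq -c_\sigma\log\eta_i + C(\sigma), \qquad c_\sigma:=\frac{\lambda(F_\sigma)}{|\lambda|}<1 \text{ for small }\sigma.
\end{equation*}
Combined with the previous display, $\Phi_{\lambda,\mu,p}(\rho_{K_i})\leq(1-c_\sigma)\log\eta_i+O(1)\to -\infty$, contradicting the maximizing property. The main obstacle is exactly this rate-matching step: identifying $\eta_i=\min h_{K_i}$ as the common small scale so that the coefficient of $\log\eta_i$ coming from $T_2$ equals $1$ (courtesy of the non-hemisphere condition on $\mu$) while the coefficient coming from $T_1$ is strictly less than $1$ (courtesy of the positivity of $\lambda$ on nonempty open sets); both hypotheses of Theorem~\ref{main1} enter at precisely this point.
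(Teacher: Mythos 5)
Your proof is correct, but it follows a genuinely different normalization and degeneration analysis than the paper. The paper normalizes the maximizing sequence by requiring $K_i^*\in\mathcal{K}=\{K:\int_{\uS}h_K^p\,d\mu=|\lambda|\}$, i.e.\ $\int_{\uS}\rho_{K_i}^{-p}\,d\mu=|\lambda|$, which freezes the $\mu$-term of $\Phi_{\lambda,\mu,p}$ to a constant along the whole sequence; hypothesis (3) on $\mu$ is then used once, to show via the function $\varphi(x)=\int_{\uS}(\langle x,u\rangle)_+^p\,d\mu(u)$ that this normalized class is bounded, so that Blaschke selection applies to $K_i^*$ and degeneration of $K_0^*$ makes the single remaining term $G(K_i)=E(K_i^*)=\int\log\rho_{K_i^*}\,d\lambda$ tend to $-\infty$ (using only hypothesis (2) on $\lambda$ plus the upper bound $\rho_{K_i^*}\le c(\lambda,\mu,p)$). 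You instead normalize by circumradius, which makes boundedness trivial but lets both terms of $\Phi_{\lambda,\mu,p}$ vary, forcing the rate-matching argument you describe: the $\mu$-term decays like $\log\eta_i$ (this is where hypothesis (3) enters for you, via $\mu(\omega_\delta(x^*))>0$), while the entropy term grows only like $c_\sigma|\log\eta_i|$ with $c_\sigma=\lambda(F_\sigma)/|\lambda|<1$ (where hypothesis (2) enters, via $\lambda(\uS\setminus F)>0$ and continuity from above of the finite measure $\lambda$ along $F_\sigma\downarrow F$ -- your appeal to the boundary of $F$ having measure zero is unnecessary for this). Both arguments are sound; the paper's choice of constraint decouples the two terms and yields a shorter blow-up analysis, while yours avoids having to prove boundedness of the constrained class and makes visible exactly which hypothesis controls which rate. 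Two cosmetic points: when $K_0$ is lower-dimensional the quantity you call $\max\rho_{K_0}$ should be read as the circumradius $\max_{y\in K_0}|y|$, which is what is actually continuous under Hausdorff convergence; and your pointwise bound $\rho_{K_i}(u)\le\eta_i/\delta$ on $\omega_\delta(x_i')$ is indeed just a quantitative restatement of Lemma \ref{lemma2.1}, which the paper invokes qualitatively.
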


\begin{proof}
Let
\begin{equation*}
\mathcal{K}=\{K\in\mathcal{K}^n_0:\int_{\uS} h_K^pd\mu=|\lambda|\}.
\end{equation*}
Define the function $\varphi:\uS\rightarrow \R$, for $x\in\uS$,
$$\varphi(x)=\int_{\uS}(\langle x,u\rangle)_+^pd\mu(u).$$
Assume $\varphi$ attains its minimum at some vectors $x_{\mu}$, thus
\begin{equation*}
\int_{\uS}(\langle x,u\rangle)_+^pd\mu(u)\geq\int_{\uS}(\langle x_{\mu},u\rangle)_+^pd\mu(u)>0,
\end{equation*}
the strict inequality holds because $\mu$ is not concentrated in any closed hemisphere of $\uS$.

Now we claim that $\mathcal{K}$ is bounded. In fact, for any $K\in\mathcal{K}$. Assume $\rho_K$ attains its maximum at some $x_K\in\uS$, by the definition of the support function, it is clear that

\begin{equation*}
\rho_K(x_K)(\langle x_K,u\rangle)_+\leq h_K(u), \quad \forall u\in \uS.
\end{equation*}
Then
\begin{equation*}
\rho^p_K(x_K)\int_{\uS}(\langle x_K,u\rangle)^p_+ d\mu(u)\leq \int_{\uS} h^p_K(u)d\mu(u)=|\lambda|,
\end{equation*}
which implies that
\begin{equation*}
\rho_K(x_K)\leq |\lambda|^\frac{1}{p}\bigg(\int_{\uS}(\langle x_{\mu},u\rangle)_+^pd\mu(u)\bigg)^{-\frac{1}{p}}=c(\lambda,\mu,p).
\end{equation*}
Therefore, $\mathcal{K}$ is bounded.

By \eqref{5.2}, $\Phi_{\lambda,\mu,p}(\rho_K)$ is homogeneous of degree 0, it is feasible to choose a maximizing sequence $K_i$ for $\Phi_{\lambda,\mu,p}(\rho_K)$ such that $K_i^*\in\mathcal{K}$.
By Blaskchke selection theorem, the sequence $K_i^*$ has a subsequence, still denoted by $K_i^*$, such that $K_i^*\rightarrow K_0^*$ for some $K_0^*$.

We prove $K_0^*\in\mathcal{K}^n_0$ by contradiction. Or we can assume $K_0^*\subset u_0^\bot$ for some $u_0\in\uS$, then
$$\lim_{i\rightarrow\infty}h_{K_i^*}(u_0)=h_{K_0^*}(u_0)=0.$$
By Lemma \ref{lemma2.1}, there exists $0<\delta<1$ such that \begin{equation}\label{5.12}
\lim_{i\rightarrow\infty}\rho_{K_i^*}(x)\rightarrow0, \quad \forall x\in\omega_{\delta}(u_0).
\end{equation}
Then
\begin{eqnarray}\label{5.13}
G(K_i)&=&E(K_i^{*})\\\nonumber
&=&\int_{\uS}\log \rho_{K_i^*}(x)d\lambda(x)\\ \nonumber
&\leq&\int_{\omega_\delta(u_0)}\log \rho_{K_i^*}(x)d\lambda(x)+\int_{\uS\backslash \omega_\delta(u_0)}\log c(\lambda,\mu,p)d\lambda(x)\\ \nonumber
&\leq&\int_{\omega_\delta(u_0)}\log \rho_{K_i^*}(x)d\lambda(x)+\log c(\lambda,\mu,p)\lambda(\uS\backslash \omega_\delta(u_0)).
\end{eqnarray}
From \eqref{5.12},\eqref{5.13} and the condition $\lambda(\omega_\delta(u_0))$ is positive, it follows that
\begin{equation*}
G(K_i)\rightarrow-\infty \quad \text{as} \quad i\rightarrow\infty,
\end{equation*}
thus
\begin{equation*}
\Phi_{\lambda,\mu,p}(\rho_{K_i})\rightarrow-\infty \quad \text{as} \quad i\rightarrow\infty.
\end{equation*}

But it is easy to know that for $r=(|\lambda|/|\mu|)^{\frac{1}{p}}$, $rB\in\mathcal{K}$, and
\begin{equation*}
\Phi_{\lambda,\mu,p}(K_i)>\Phi_{\lambda,\mu,p}(rB)=\Phi_{\lambda,\mu,p}(B)=0,
\end{equation*}
which is a contradiction.
Then $K_0^*\in\mathcal{K}^n_0$, the proof is complete.
\end{proof}

\begin{lemma}\label{lemma5.2}
Under the assumptions of Theorem \ref{main1}, if $K$ is a solution of the maximization problem
\begin{equation}\label{5.6}
\sup\{\Phi_{\lambda,\mu,p}(\rho_K):K\in \mathcal{K}^n_0 \}
\end{equation}
under the restriction
\begin{equation}\label{lm}
\int_{\uS}\rho_K^{-p}d\mu=|\lambda|,
\end{equation}
then
\begin{equation*}
\mu=\lambda_p(K,\cdot).
\end{equation*}
\end{lemma}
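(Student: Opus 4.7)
The plan is a standard Lagrange-multiplier / first-variation computation against the maximality of $K$. The key simplification is that $\Phi_{\lambda,\mu,p}$ is homogeneous of degree $0$ by \eqref{5.2}, so any free perturbation of $\rho_K$ can be rescaled at no cost in $\Phi_{\lambda,\mu,p}$ so as to satisfy \eqref{lm}. Combined with Lemma \ref{lemma5.1}, this shows that $\rho_K$ is in fact a free maximizer of $\Phi_{\lambda,\mu,p}$ on $C^+(\uS)$, so the constraint \eqref{lm} can be dropped entirely when writing down the first-order condition.

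Concretely, I pick an arbitrary test function $g\in C(\uS)$ and set
\[
\rho_t(u):=\rho_K(u)\,e^{tg(u)}, \qquad |t|<\delta.
\]
This is a logarithmic family of the form \eqref{rt}, so Lemma \ref{lemma4.2} (equation \eqref{4.3}) yields
\[
\frac{d}{dt}G(\langle\rho_t\rangle)\bigg|_{t=0} = -\int_{\uS} g(u)\,d\lambda(K,u).
\]
Differentiating $\rho_t^{-p} = \rho_K^{-p}e^{-ptg}$ under the integral (dominated convergence is immediate since $\rho_K$ is continuous and positive on the compact set $\uS$), and then using \eqref{lm} to evaluate the denominator, one gets
\[
\frac{d}{dt}\log\|\rho_t:\mu\|_{-p}\bigg|_{t=0} = \frac{1}{|\lambda|}\int_{\uS} g(u)\,\rho_K^{-p}(u)\,d\mu(u).
\]

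Adding these two identities according to \eqref{phi}, the free maximality of $\rho_K$ forces
\[
\frac{d}{dt}\Phi_{\lambda,\mu,p}(\rho_t)\bigg|_{t=0} = \frac{1}{|\lambda|}\left(\int_{\uS} g\,\rho_K^{-p}\,d\mu - \int_{\uS} g\,d\lambda(K,\cdot)\right) = 0
\]
for every $g\in C(\uS)$. By the Riesz representation theorem this is the equality of Borel measures $\rho_K^{-p}\,d\mu = d\lambda(K,\cdot)$, and multiplying through by $\rho_K^p$ and invoking \eqref{lambda_p} produces the claim $d\mu = \rho_K^p\,d\lambda(K,\cdot) = d\lambda_p(K,\cdot)$.

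The one place that requires any genuine care is absorbing the constraint \eqref{lm} into the first-variation argument: one should verify that the perturbed family $\rho_t = \rho_K e^{tg}$, which generically violates \eqref{lm}, can be replaced by $c(t)\rho_t$ with $c(t)$ a smooth positive scalar chosen so that \eqref{lm} holds for each $t$, and that both families yield the same value of $\Phi_{\lambda,\mu,p}$ by the degree-zero homogeneity \eqref{5.2}. Everything else — the dominated convergence, the application of \eqref{4.3}, and the passage from a continuous-functional identity to an equality of Borel measures — is routine.
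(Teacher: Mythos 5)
Your proposal is correct and follows essentially the same route as the paper: the same logarithmic perturbation $\rho_t=\rho_K e^{tg}$, the same application of Lemma \ref{lemma4.2} for the entropy term, the same differentiation of $\log\|\rho_t:\mu\|_{-p}$ using the normalization \eqref{lm}, and the same passage to equality of measures via \eqref{lambda_p}. The only cosmetic difference is that you make explicit the rescaling argument that turns the constrained maximizer into a free one, which the paper leaves implicit in its appeal to Lemma \ref{lemma5.1} and the degree-zero homogeneity \eqref{5.2}.
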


\begin{proof}
By Lemma \ref{lemma5.1} and $K$ is a maximizer of \eqref{5.6},
\begin{equation*}
\Phi_{\lambda,\mu,p}(\rho_K)=\sup\{\Phi_{\lambda,\mu,p}(\rho_L):L\in \mathcal{K}^n_0\}=\sup\{\Phi_{\lambda,\mu,p}(f):f\in C^+(\uS)\}.
\end{equation*}
Define
\begin{equation}\label{5.7}
\rho_t=\rho_K e^{tg},\quad \forall g\in C^+(\uS),
\end{equation}
then $t=0$ must be the maximum point of $\Phi_{\lambda,\mu,p}(\rho_t)$, that is
\begin{equation}\label{5.8}
\frac{d}{dt}\Phi_{\lambda,\mu,p}(\rho_t)\bigg|_{t=0}=0.
\end{equation}
By \eqref{5.7},
\begin{equation*}
\log\rho_t=\log\rho_K + tg,
\end{equation*}
then Lemma \ref{lemma4.2} shows that
\begin{equation}\label{5.9}
\frac{d}{dt}G(\langle\rho_t\rangle)\bigg|_{t=0}=-\int_{\uS}g(u)d\lambda(K,u).
\end{equation}

For $|s|<1$, the inequality
\begin{equation}\label{5.10}
|e^s-1-s|\leq es^2
\end{equation}
holds. Choosing $s=-ptg(u)$, \eqref{5.10} shows that for $|t|<\frac{1}{|p|\max_{u\in\uS}g(u)}$,
\begin{equation*}
|e^{-ptg(u)}-1+ptg(u)|\leq ep^2g^2(u)|t|^2, \quad \forall u\in\uS,
\end{equation*}
which implies
\begin{equation*}
|\frac{e^{-tpg(u)}-1}{t}+pg(u)|\leq ep^2g^2(u)|t|, \quad \forall u\in\uS.
\end{equation*}
Then
\begin{equation*}
\lim_{t\rightarrow0}\frac{\rho_t^{-p}(u)-\rho_0^{-p}(u)}{t}= \lim_{t\rightarrow0}\frac{e^{-tpg(u)}-1}{t}\rho_K^{-p}(u)=-pg(u)\rho_{K}^{-p}(u), \quad \forall u\in\uS.
\end{equation*}
By \eqref{lm}, we get
\begin{eqnarray}\label{5.11}
|\lambda|\frac{d}{dt}\log||\rho_t:\mu||_{-p}\bigg| _{t=0}&=&
\int_{\uS}\rho_K^{-p}(u)d\mu(u)\cdot\frac{d}{dt}\log||\rho_t:\mu||_{-p}\bigg| _{t=0}\\\nonumber
&=&-\frac{1}{p}\int_{\uS}\lim_{t\rightarrow0}\frac{\rho_t^{-p}(u)
-\rho_0^{-p}(u)}{t}d\mu(u)\\\nonumber
&=&\int_{\uS}\rho_{K}^{-p}(u)g(u)d\mu(u).
\end{eqnarray}
Therefore, \eqref{5.8}, \eqref{5.9}, \eqref{5.11} and the definition of $\Phi_{\lambda,\mu,p}$ show that
\begin{equation*}
\int_{\uS}\rho_{K}^{-p}(u)g(u)d\mu(u)=\int_{\uS}g(u)d\lambda(K,u).
\end{equation*}
Thus $\rho_{K}(u)^{-p}d\mu(u)=d\lambda(K,u)$, which completes the proof in view of \eqref{lambda_p}.
\end{proof}

\textbf{The proof of Theorem \ref{main1}}. By Lemma \ref{lemma5.3}, there exists a convex body $K_0\in\mathcal{K}^n_0$ such that
\begin{equation*}
\sup\{\Phi_{\lambda,\mu,p}(\rho_K):K\in\mathcal{K}^n_0\}=\Phi_{\lambda,\mu,p}(\rho_{K_0}).
\end{equation*}
By Lemma \ref{lemma5.2}, $K_0$ satisfies that
\begin{equation*}
\mu=\lambda_p(K_0,\cdot).
\end{equation*}

\subsection{The proof of Theorem \ref{main2}}

\

This subsection is intended to provide a detailed proof of Theorem \ref{main2}.

\begin{lemma}\label{lemma5.5}
Under the assumptions of Theorem \ref{main2}, there exists an origin-symmetric convex body $K_0$ such that
\begin{equation*}
\sup\{\Phi_{\lambda,\mu,p}(\rho_K):K\in\mathcal{K}^n_e\}=\Phi_{\lambda,\mu,p}(\rho_{K_0}).
\end{equation*}
\end{lemma}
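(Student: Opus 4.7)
The plan is to mimic Lemma \ref{lemma5.3} while exploiting origin-symmetry and the hypothesis that $\mu$ vanishes on great sub-spheres, since the inequality $\rho_K(x_K)(\langle x_K,u\rangle)_+\leq h_K(u)$ used there relies on $p>0$. First, by the degree-zero homogeneity \eqref{5.2} I would restrict attention to
$$\mathcal{K}_e' = \Bigl\{K\in\mathcal{K}^n_e : \int_{\uS} \rho_K^{-p}\,d\mu = |\lambda|\Bigr\},$$
on which $\log\|\rho_K:\mu\|_{-p}$ is a constant; the problem then reduces to maximizing $G(K)=-\int_{\uS}\log h_K\,d\lambda$ over $\mathcal{K}_e'$. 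Pick a maximizing sequence $\{K_i\}\subset\mathcal{K}_e'$.

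To obtain a uniform upper bound on $\{K_i\}$, let $u_i\in\uS$ attain $R_i:=\max_u\rho_{K_i}(u)$. Origin-symmetry gives $\pm R_iu_i\in K_i$, so
$$h_{K_i}(w)\geq R_i|\langle u_i,w\rangle|\quad\text{for every }w\in\uS.$$
Because $\lambda$ is absolutely continuous with respect to spherical Lebesgue measure, the map $u\mapsto\int_{\uS}\log|\langle u,w\rangle|\,d\lambda(w)$ is finite and continuous on $\uS$, hence bounded below by some $-C_\lambda$. Integrating the previous inequality yields
$$\int_{\uS}\log h_{K_i}\,d\lambda \geq |\lambda|\log R_i - C_\lambda,$$
so $R_i\to\infty$ would force $G(K_i)\to-\infty$, contradicting the choice of $\{K_i\}$ (the supremum is finite because a rescaled ball belongs to $\mathcal{K}_e'$). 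Hence $R_i\leq R_\infty$ uniformly, and the Blaschke selection theorem yields, along a subsequence, $K_i\to K_0$ in the Hausdorff metric with $K_0$ origin-symmetric.

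The main step is to show $K_0\in\mathcal{K}^n_e$. If not, origin-symmetry forces $K_0\subset v^\perp$ for some $v\in\uS$, so $h_{K_i}(v)\to h_{K_0}(v)=0$. Lemma \ref{lemma2.1} together with the symmetry $\rho_{K_i}(-u)=\rho_{K_i}(u)$ then gives $\rho_{K_i}\to 0$ pointwise on $\omega'_\delta(v)$ for every fixed $\delta\in(0,1)$; combined with the uniform bound $\rho_{K_i}\leq R_\infty$, the dominated convergence theorem produces $\int_{\omega'_\delta(v)}\rho_{K_i}^{-p}\,d\mu\to 0$. The trivial estimate $\rho_{K_i}^{-p}\leq R_\infty^{-p}$ on $\uS\setminus\omega'_\delta(v)$ and the normalization $\int_{\uS}\rho_{K_i}^{-p}\,d\mu=|\lambda|$ then yield
$$|\lambda|\leq R_\infty^{-p}\,\mu\bigl(\{u\in\uS:|\langle u,v\rangle|<\delta\}\bigr) + o(1)\qquad (i\to\infty).$$
Since $\mu$ is a finite Borel measure vanishing on the great sub-sphere $v^\perp\cap\uS$, continuity from above forces $\mu(\{|\langle u,v\rangle|<\delta\})\to 0$ as $\delta\to 0$; choosing $\delta$ small enough and then $i$ large enough produces the contradiction $|\lambda|<|\lambda|$. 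Thus $K_0\in\mathcal{K}^n_e$, and since $K_0$ has non-empty interior, $h_{K_i}\to h_{K_0}$ and $\rho_{K_i}\to\rho_{K_0}$ uniformly on $\uS$, so $\Phi_{\lambda,\mu,p}(\rho_{K_0})=\lim_i\Phi_{\lambda,\mu,p}(\rho_{K_i})=\sup$. The hard part is this non-degeneracy argument, which is the only place that genuinely uses the vanishing hypothesis on $\mu$ and requires the a priori bound $R_\infty$ and the dominated-convergence estimate on $\omega'_\delta(v)$ to cohere for a single $\delta$.
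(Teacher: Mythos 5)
Your proposal is correct and follows essentially the same route as the paper's proof: normalize using the degree-zero homogeneity, derive an a priori bound from $\rho_K(u_K)|\langle u_K,x\rangle|\le h_K(x)$ integrated in $\log$ against $\lambda$, apply the Blaschke selection theorem, and rule out a degenerate limit $K_0\subset v^\perp$ via Lemma \ref{lemma2.1} combined with continuity from above of $\mu$ at the great subsphere $v^\perp\cap\uS$. The only (immaterial) differences are that you normalize by $\int_{\uS}\rho_K^{-p}\,d\mu=|\lambda|$ where the paper fixes $\int_{\uS}\log h_K\,d\lambda=0$, and you use dominated convergence where the paper asserts uniform convergence; note that both arguments rest on the same implicit claim that $u\mapsto\int_{\uS}\log|\langle u,x\rangle|\,d\lambda(x)$ is bounded below on $\uS$, which the paper likewise leaves unjustified for a general absolutely continuous $\lambda$.
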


\begin{proof}
Let
\begin{equation*}
\mathcal{K}=\{K\in\mathcal{K}^n_e:\int_{\uS} \log h_K(x)d\lambda(x)=0\}.
\end{equation*}
Since $\Phi_{\lambda,\mu,p}$ is homogeneous of degree 0, the maximization problem is equivalent to
\begin{equation*}
\sup\{\log||\rho_K:\mu||_{-p}:K\in\mathcal{K}\}.
\end{equation*}

Now we claim that $\mathcal{K}$ is bounded. In fact, for any $K\in\mathcal{K}$. Assume $\rho_K$ attains its maximum at some $u_K\in\uS$, by the definition of the support function and $K\in\mathcal{K}^n_e$, it is clear that
\begin{equation*}
\rho_K(u_K)|\langle u_K,x\rangle|\leq h_K(x), \quad  \forall x\in \uS,
\end{equation*}
which implies
\begin{equation*}
\rho_{K}(u_K)|\lambda|+ \int_{\uS}\log |\langle u_K,x\rangle|d\lambda(x)\leq \int_{\uS}\log h_K(x)d\lambda(x)=0.
\end{equation*}
Then
\begin{equation*}
\rho_{K}(u_K)\leq -|\lambda|^{-1} \int_{\uS}\log |\langle u_K,x\rangle|d\lambda(x)= R.
\end{equation*}
Therefore, $\mathcal{K}$ is bounded.

By \eqref{5.2}, $\Phi_{\lambda,\mu,p}(\rho_K)$ is homogeneous of degree 0, it is feasible to choose a maximizing sequence $K_i$ for $\Phi_{\lambda,\mu,p}(\rho_K)$ such that $K_i\in\mathcal{K}$.
By Blaskchke selection theorem, the sequence $K_i$ has a subsequence, still denoted by $K_i$, such that $K_i\rightarrow K_0$ for some $K_0$.

We prove $K_0\in\mathcal{K}^n_e$ by contradiction. Otherwise, assume $K_0\subset x_0^\bot$ for some $x_0\in\uS$, then
$$\lim_{i\rightarrow\infty}h_{K_i}(x_0)=h_{K_0}(x_0)=0.$$
By Lemma \ref{lemma2.1}, there exists $0<\delta<1$ such that
\begin{equation*}
\lim_{i\rightarrow\infty}\rho_{K_i}(u)\rightarrow0, \quad \forall u\in\omega'_{\delta}(x_0).
\end{equation*}

Therefore,
\begin{eqnarray*}
\int_{\uS}\rho_{K_i}^{-p}(u)d\mu(u)&=&\int_{\omega'_{\delta}(x_0)}\rho_{K_i}^{-p}(u)d\mu(u)
+\int_{\uS\backslash\omega'_{\delta}(x_0)}\rho_{K_i}^{-p}(u)d\mu(u)\\
&\leq& \int_{\omega'_{\delta}(x_0)}\rho_{K_i}^{-p}(u)d\mu(u)+R^{-p}\mu(\uS\backslash\omega'_{\delta}(x_0)).
\end{eqnarray*}

Since $\mu$ vanishes on any great sub-sphere of $\uS$, then $\mu(\uS\cap x_0^\bot)=0$. Choose $1>\delta_1>\delta_2>\cdots>\delta_j\rightarrow0$, it is clear that
$$\uS\backslash\omega'_{\delta_1}(x_0)\supset\uS\backslash\omega'_{\delta_2}(x_0)\supset\cdots,$$
with
\begin{equation*}
\bigcap_{j=1}^{\infty}(\uS\backslash\omega'_{\delta_j}(x_0))=\uS\cap x_0^\bot.
\end{equation*}
Therefore
\begin{equation*}
\lim_{j\rightarrow\infty}\mu(\uS\backslash\omega'_{\delta_j}(x_0))=\mu(\uS\cap x_0^\bot)=0.
\end{equation*}
For any given $\epsilon>0$, there exists a positive integer $j_0$, such that
\begin{equation*}
R^{-p}\mu(\uS\backslash\omega'_{\delta_{j_0}}(x_0))<\frac{\epsilon}{2}.
\end{equation*}
Since $\rho_{K_i}\rightarrow 0$ uniformly on $\omega'_{\delta_{j_0}}(x_0)$ and $\mu$ is a finite measure, there exists a positive integer $N$, such that
\begin{equation*}
\int_{\omega'_{\delta_{j_0}}(x_0)}\rho_{K_i}^{-p}(u)d\mu(u)<\frac{\epsilon}{2}, \quad \forall i>N.
\end{equation*}
Therefore,
\begin{equation*}
\int_{\uS}\rho_{K_i}^{-p}(u)d\mu(u)\rightarrow0 \quad \text{ as } \quad i\rightarrow\infty.
\end{equation*}
So
\begin{equation*}
\Phi_{\lambda,\mu,p}(\rho_{K_i})=\log||\rho_{K_i}:\mu||_{-p}\rightarrow-\infty \quad \text{ as } \quad i\rightarrow\infty.
\end{equation*}

But $K_i$ is a maximizing sequence for $\Phi_{\lambda,\mu,p}$, then for sufficiently large $i$, one can see
\begin{equation*}
\Phi_{\lambda,\mu,p}(\rho_{K_i})>\Phi_{\lambda,\mu,p}(\rho_{B})=0.
\end{equation*}
That is a contradiction. Then $K\in\mathcal{K}^n_e$, the proof is complete.
\end{proof}

\begin{lemma}\label{lemma5.4}
Under the assumptions of Theorem \ref{main2}, if $K$ is a solution of the maximization problem
\begin{equation*}
\sup\{\Phi_{\lambda,\mu,p}(\rho_K):K\in \mathcal{K}^n_e \}
\end{equation*}
under the restriction
\begin{equation*}
\int_{\uS}\rho_K^{-p}d\mu=|\lambda|,
\end{equation*}
then
\begin{equation*}
\mu=\lambda_p(K,\cdot).
\end{equation*}
\end{lemma}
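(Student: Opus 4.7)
The plan is to mirror the proof of Lemma \ref{lemma5.2} in the origin-symmetric setting: I perturb $K$ by an even logarithmic variation so the competitor stays in $\mathcal{K}^n_e$, deduce an integral identity against every even test function, and finally invoke the evenness of the two measures to upgrade this to a measure equality.

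First I fix an arbitrary even $g \in C(\uS)$ and set $\rho_t := \rho_K \, e^{tg}$ for $|t|<\delta$. Since $K \in \mathcal{K}^n_e$ gives $\rho_K$ even and $g$ is even, $\rho_t$ is even, so $\langle \rho_t \rangle \in \mathcal{K}^n_e$ is an admissible competitor. By the maximality of $K$ and Lemma \ref{lemma5.1} (restricted to the even class),
\[
\frac{d}{dt}\Phi_{\lambda,\mu,p}(\rho_t)\bigg|_{t=0} = 0.
\]
The two terms of this derivative are handled exactly as in Lemma \ref{lemma5.2}: Lemma \ref{lemma4.2} yields $\frac{d}{dt}G(\langle \rho_t\rangle)|_{t=0} = -\int_{\uS} g \, d\lambda(K,\cdot)$, and the elementary bound $|e^s-1-s| \leq e s^2$ combined with the constraint $\int \rho_K^{-p} d\mu = |\lambda|$ yields $|\lambda|\,\frac{d}{dt}\log \|\rho_t:\mu\|_{-p}|_{t=0} = \int_{\uS} g \, \rho_K^{-p} d\mu$. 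Equating the two, applying \eqref{lambda_p}, and replacing $g$ by the still-even function $g \rho_K^p$, I obtain
\[
\int_{\uS} g(u)\, d\mu(u) \;=\; \int_{\uS} g(u)\, d\lambda_p(K,u) \quad\text{for every even } g \in C(\uS).
\]

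The only genuinely new step is dropping the evenness assumption on the test function. For this I would verify that $\lambda_p(K,\cdot)$ is an even measure: origin-symmetry $K=-K$ gives $\boldsymbol{\alpha}_K(-\omega) = -\boldsymbol{\alpha}_K(\omega)$, so the evenness of $\lambda$ transfers to $\lambda(K,\cdot)$, and multiplication by the even density $\rho_K^p$ keeps $\lambda_p(K,\cdot)$ even. Since $\mu$ is even by hypothesis, decomposing an arbitrary $f \in C(\uS)$ as $f = f_e + f_o$ with $f_e(u)=\tfrac{1}{2}(f(u)+f(-u))$ shows that both measures annihilate the odd part $f_o$, and the identity above applied to $g = f_e$ delivers the identity for $f$. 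Hence $\mu = \lambda_p(K,\cdot)$.

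The principal obstacle is not a computation but this conceptual point: the even-perturbation argument only produces variational information against even $g$, and the reduction to arbitrary continuous $f$ rests essentially on the evenness of $\lambda_p(K,\cdot)$, which is the single ingredient peculiar to the symmetric setting and the reason one does not need the "not concentrated in a hemisphere" hypothesis of Theorem \ref{main1} here.
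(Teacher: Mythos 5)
Your proposal is correct and follows the same route the paper intends: the paper's own proof of this lemma is literally the single line ``the proof is the same as that of Lemma \ref{lemma5.2}.'' The one point you treat that the paper leaves implicit --- that an even perturbation $\rho_t=\rho_K e^{tg}$ only yields the variational identity against even test functions, so one must check that $\lambda_p(K,\cdot)$ is even (via $\boldsymbol{\alpha}_K(-\omega)=-\boldsymbol{\alpha}_K(\omega)$ for $K=-K$ and the evenness of $\lambda$ and $\rho_K$) before concluding $\mu=\lambda_p(K,\cdot)$ --- is exactly the detail that distinguishes the symmetric case, and your handling of it is right.
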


\begin{proof}
The proof of this Lemma is the same as that of Lemma \ref{lemma5.2}.
\end{proof}

\textbf{The proof of Theorem \ref{main2}}. By Lemma \ref{lemma5.5}, there exists a convex body $K_0\in\mathcal{K}^n_e$ such that
\begin{equation*}
\sup\{\Phi_{\lambda,\mu,p}(\rho_K):K\in\mathcal{K}^n_e\}=\Phi_{\lambda,\mu,p}(\rho_{K_0}).
\end{equation*}
By Lemma \ref{lemma5.4}, $K_0$ satisfies that
\begin{equation*}
\mu=\lambda_p(K_0,\cdot).
\end{equation*}


\begin{thebibliography}{10}

\bibitem{A1}
A.~Aleksandrov, \emph{Existence and uniqueness of a convex surface with a given integral curvature},
C. R. (Dokl.) Acad. Sci. URSS (NS) \textbf{35} (1942), 131-134.

\bibitem{A2}
\leavevmode\vrule height 2pt depth -1.6pt width 23pt, \emph{Convex Polyhedra}, Springer Monographs in Mathematics. Springer, Berlin (2005).



\bibitem{B16}
J.~Bertrand, \emph{Prescription of Gauss curvature using optimal mass
 transport}, Geom. Dedicata \textbf{183}   (2016), 81-99.

\bibitem{BBC.AiAM.111-2019.101937}
G.~Bian, K. B\"{o}r\"{o}czky and A.~Colesanti, \emph{The {O}rlicz
  version of the {$L_p$} {M}inkowski problem for {$-n< p<0$}}, Adv. in Appl.
  Math., \textbf{111}    (2019), 101937.

\bibitem{B2020}
K.~B\"{o}r\"{o}czky, E. Lutwak, D. Yang, G. Zhang and Y.
Zhao, \emph{The Gauss image problem}, Communications on Pure and
Applied Mathematics, Vol. LXXIII, (2020), 1406--1452.

\bibitem{BF.JDE.266-2019.7980}
K.~B\"{o}r\"{o}czky and F.~Fodor, \emph{The {$L_p$} dual {M}inkowski
  problem for {$p>1$} and {$q>0$}}, J. Differential Equations, \textbf{266}  (2019), 7980--8033.

\bibitem{BHP.JDG.109-2018.411}
K.~B\"{o}r\"{o}czky, M.~Henk and H.~Pollehn, \emph{Subspace
  concentration of dual curvature measures of symmetric convex bodies}, J.
  Differential Geom., \textbf{109} (2018), 411--429.

\bibitem{BLYZ.JAMS.26-2013.831}
K.~B{\"o}r{\"o}czky, E.~Lutwak, D.~Yang and G.~Zhang, \emph{The
  logarithmic {M}inkowski problem}, J. Amer. Math. Soc., \textbf{26}   (2013),
  831--852.



\bibitem{CHZ.MA.373-2019.953}
C.~Chen, Y.~Huang and Y.~Zhao, \emph{Smooth solutions to the {$L_p$} dual
  {M}inkowski problem}, Math. Ann., \textbf{373} (2019), 953--976.

\bibitem{CCL}
H.~Chen, S.~Chen and Q.~Li, \emph{Variations of a class of
  {M}onge-{A}mpere type functionals and their applications}.
\newblock Accepted by Anal. PDE.

\bibitem{CLZ.TAMS.371-2019.2623}
S.~Chen, Q.~Li and G.~Zhu, \emph{The logarithmic {M}inkowski problem
  for non-symmetric measures}, Trans. Amer. Math. Soc., \textbf{371}  (2019),
  2623--2641.

\bibitem{CLLN}
L.~Chen, Y.~Liu, J.~Lu and N.~Xiang, \emph{
 Existence of smooth even solutions to the dual
Orlicz-Minkowski problem}, arXiv:2005.02639.

\bibitem{CTWX}
L.~Chen, Q.~Tu, D.~Wu and N.~Xiang, \emph{
Anisotropic Gauss curvature flows and their associated Dual Orlicz-Minkowski problems},
to appear in Proceedings of the Royal Society of Edinburgh Section A: Mathematics.


\bibitem{CWX}
L.~Chen, D.~Wu and N.~Xiang, \emph{
Smooth solutions to the Gauss image problem}, arXiv:2012.11367.


\bibitem{CW.AIHPANL.17-2000.733}
K.~Chou and X.~Wang, \emph{A logarithmic {G}auss curvature flow and
  the {M}inkowski problem}, Ann. Inst. H. Poincar\'e Anal. Non Lin\'eaire, \textbf{17}
  (2000), 733--751.

\bibitem{CW.Adv.205-2006.33}
\leavevmode\vrule height 2pt depth -1.6pt width 23pt, \emph{The
  {$L_p$}-{M}inkowski problem and the {M}inkowski problem in centroaffine
  geometry}, Adv. Math., \textbf{205} (2006), 33--83.


\bibitem{FB}
Y.~Feng and B.~He, \emph{The Orlicz Aleksandrov problem for Orlicz integral curvature}, Int. Math. Res. Notices, \text{7} (2021), 5492--5519.


\bibitem{GHW+.CVPDE.58-2019.12}
R.~Gardner, D.~Hug, W.~Weil, S.~Xing and D.~Ye, \emph{General volumes
  in the {O}rlicz-{B}runn-{M}inkowski theory and a related {M}inkowski problem
  {I}}, Calc. Var. Partial Differential Equations, \textbf{58} (2019), Paper No. 12,
  35.


\bibitem{GHXY.CVPDE.59-2020.15}
R.~Gardner, D.~Hug, S.~Xing and D.~Ye, \emph{General volumes in the
  Orlicz-Brunn-Minkowski theory and a related Minkowski problem II},
  Calc. Var. Partial Differential Equations, \textbf{59} (2020), Paper No. 15, 33.



\bibitem{HLYZ.Adv.224-2010.2485}
C.~Haberl, E.~Lutwak, D.~Yang and G.~Zhang, \emph{The even Orlicz
  Minkowski problem}, Adv. Math., \textbf{224} (2010), 2485--2510.



\bibitem{HP.Adv.323-2018.114}
M.~Henk and H.~Pollehn, \emph{Necessary subspace concentration conditions
  for the even dual {M}inkowski problem}, Adv. Math., \textbf{323} (2018), 114--141.

\bibitem{HH.DCG.48-2012.281}
Q.~Huang and B.~He, \emph{On the {O}rlicz {M}inkowski problem for
  polytopes}, Discrete Comput. Geom., \textbf{48} (2012), 281--297.

\bibitem{HJ.JFA.277-2019.2209}
Y.~Huang and Y.~Jiang, \emph{Variational characterization for the planar
  dual {M}inkowski problem}, J. Funct. Anal., \textbf{277} (2019), 2209--2236.

\bibitem{HLYZ.Acta.216-2016.325}
Y.~Huang, E.~Lutwak, D.~Yang and G.~Zhang, \emph{Geometric measures in
  the dual {B}runn-{M}inkowski theory and their associated {M}inkowski
  problems}, Acta Math., \textbf{216} (2016), 325--388.

\bibitem{HLYZ.JDG.110-2018.1}
\leavevmode\vrule height 2pt depth -1.6pt width 23pt, \emph{The
  {$L_p$}-{A}leksandrov problem for {$L_p$}-integral curvature}, J.
  Differential Geom., \textbf{110} (2018), 1--29.

\bibitem{HZ.Adv.332-2018.57}
Y.~Huang and Y.~Zhao, \emph{On the {$L_p$} dual {M}inkowski problem}, Adv.
  Math., \textbf{332} (2018), 57--84.

\bibitem{HLYZ.DCG.33-2005.699}
D.~Hug, E.~Lutwak, D.~Yang and G.~Zhang, \emph{On the {$L_p$} {M}inkowski
  problem for polytopes}, Discrete Comput. Geom., \textbf{33} (2005), 699--715.



\bibitem{JL.Adv.344-2019.262}
H.~Jian and J.~Lu, \emph{Existence of solutions to the
  {O}rlicz-{M}inkowski problem}, Adv. Math., \textbf{344} (2019), 262--288.

\bibitem{JLW.JFA.274-2018.826}
H.~Jian, J.~Lu and X.~Wang, \emph{A priori estimates and existence of
  solutions to the prescribed centroaffine curvature problem}, J. Funct. Anal.,
  \textbf{274} (2018), 826--862.

\bibitem{JLZ.CVPDE.55-2016.41}
H.~Jian, J.~Lu and G.~Zhu, \emph{Mirror symmetric solutions to the
  centro-affine {M}inkowski problem}, Calc. Var. Partial Differential
  Equations, \textbf{55} (2016), Paper No. 41.



\bibitem{LLL}
Q.~Li, J.~Liu and J.~Lu, \emph{Non-uniqueness of solutions to the
  {$L_p$} dual {M}inkowski problem}.
\newblock Preprint.

\bibitem{LSW.JEMSJ.22-2020.893}
Q.~Li, W.~Sheng and X.~Wang, \emph{Flow by {G}auss curvature to the
  {A}leksandrov and dual {M}inkowski problems}, J. Eur. Math. Soc. (JEMS), \textbf{22}
  (2020), 893--923.

\bibitem{LL.TAMS.373-2020.5833}
Y.~Liu and J.~Lu, \emph{A flow method for the dual {O}rlicz-{M}inkowski
  problem}, Trans. Amer. Math. Soc., \textbf{373} (2020), 5833--5853.

\bibitem{Lu.SCM.61-2018.511}
J.~Lu, \emph{Nonexistence of maximizers for the functional of the
  centroaffine {M}inkowski problem}, Sci. China Math., \textbf{61} (2018), 511--516.

\bibitem{Lu.JDE.266-2019.4394}
\leavevmode\vrule height 2pt depth -1.6pt width 23pt, \emph{A remark on
  rotationally symmetric solutions to the centroaffine {M}inkowski problem}, J.
  Differential Equations, \textbf{266} (2019), 4394--4431.

\bibitem{LW.JDE.254-2013.983}
J.~Lu and X.~Wang, \emph{Rotationally symmetric solutions to the
  {$L_p$}-{M}inkowski problem}, J. Differential Equations, \textbf{254} (2013),
  983--1005.

\bibitem{Lut.JDG.38-1993.131}
E.~Lutwak, \emph{The {B}runn-{M}inkowski-{F}irey theory. {I}. {M}ixed
  volumes and the {M}inkowski problem}, J. Differential Geom., \textbf{38} (1993),
 131--150.

\bibitem{LYZ.TAMS.356-2004.4359}
E.~Lutwak, D.~Yang and G.~Zhang, \emph{On the {$L_p$}-{M}inkowski
  problem}, Trans. Amer. Math. Soc., \textbf{356} (2004), 4359--4370.

\bibitem{LYZ.Adv.329-2018.85}
\leavevmode\vrule height 2pt depth -1.6pt width 23pt, \emph{{$L_p$} dual
  curvature measures}, Adv. Math., \textbf{329} (2018), 85-132.



\bibitem{Sta.Adv.167-2002.160}
A.~Stancu, \emph{The discrete planar {$L_0$}-{M}inkowski problem}, Adv.
  Math., \textbf{167} (2002), 160--174.

\bibitem{SR}
R.~Schneider,\emph{Convex bodies: the Brunn-Minkowski theory. Second expanded edition. Encyclopedia of Mathematics and Its Applications}, 151. Cambridge University Press, Cambridge,
2014.





\bibitem{Zha.CVPDE.56-2017.18}
Y.~Zhao, \emph{The dual {M}inkowski problem for negative indices}, Calc.
  Var. Partial Differential Equations, \textbf{56} (2017), Paper No. 18.

\bibitem{Zha.JDG.110-2018.543}
\leavevmode\vrule height 2pt depth -1.6pt width 23pt, \emph{Existence of
  solutions to the even dual {M}inkowski problem}, J. Differential Geom., \textbf{110}
  (2018), 543--572.

\bibitem{Zhu.Adv.262-2014.909}
G.~Zhu, \emph{The logarithmic {M}inkowski problem for polytopes}, Adv.
  Math., \textbf{262} (2014), 909--931.

\bibitem{Zhu.JDG.101-2015.159}
\leavevmode\vrule height 2pt depth -1.6pt width 23pt, \emph{The centro-affine
  {M}inkowski problem for polytopes}, J. Differential Geom., \textbf{101} (2015),
 159--174.

\end{thebibliography}

\end{document}